\newcommand{\mcb}{\mathcal{B}}
\newcommand{\mcm}{\mathcal{M}}
\newcommand{\mcn}{\mathcal{N}}
\newcommand{\mcd}{\mathcal{D}}
\def\Znum{{\mathbb{Z}}} 
\newcommand{\br}[1]{\langle {#1} \rangle }
\author{K. T. Arasu \and
Daniel M. Gordon \and
Yiran Zhang
\thanks{REU research supported by an NSF grant, and presented at the Young Mathematicians' Conference at OSU August 2015}
}
\institute{K. T. Arasu 
\at Riverside Research, 2640 Hibiscus Way, Beavercreek, OH 45431,\\\email{karasu@riversideresearch.org}
\and
Daniel M. Gordon 
\at Center for Communications Research, 4320 Westerra Court, San Diego, CA 92121,\\\email{gordon@ccrwest.org}
\and
Yiran Zhang
\at Loyola University, Chicago, IL 60660,\\\email{yzhang45@luc.edu}
}
\title{New Nonexistence Results on Circulant Weighing Matrices}
\date{November 2, 2020}
\begin{document}

\maketitle

\begin{abstract}
A weighing matrix $W = (w_{i,j})$
is a square matrix of
order $n$ and entries $w_{i,j}$ in $\{0, \pm 1\}$ such that
$WW^T=kI_n$.  In his thesis, Strassler gave a table of
existence results for circulant weighing matrices with $n \leq 200$ and $k \leq 100$.

In the latest version of Strassler's table given by Tan,
there are 34 open cases remaining.  In this paper we give nonexistence
proofs for 12 of these cases, 
report on preliminary searches outside Strassler's table,
and characterize the known proper circulant weighing matrices.

\keywords{weighing matrices, circulant matrices, multipliers}
\subclass{MSC 05B20 \and MSC 05B10}

\end{abstract}

\section{Introduction}


A {\em weighing matrix} $W=W(n,k)$ with weight $k$ is a square matrix of
order $n$ with entries $w_{i,j}$ in \{-1, 0, +1\} such that $WW^T=kI_n$
where $W^T$ is the transpose of $W$ and $I_n$ is the $n\times n$
identity matrix. 

A circulant weighing matrix $C = CW(n,k)$ is a weighing
matrix in which every row except for the first is a right cyclic shift
of the previous row.  Let $P$ be the set of locations with a $+1$ in
the first row, and $N$ be the locations with a $-1$.  Then
$|P|+|N|=k$.






The following facts are well known:

\begin{enumerate}

\item $k=s^2$ for some positive integer $s$

\item $|P|=\frac{s^2+s}{2}$

\item $|N|=\frac{s^2-s}{2}$

\end{enumerate}

$|P|$ and $|N|$ are chosen by convention, since $-C$ is also a
circulant weighing matrix.
For further information on
weighing matrices, we refer the reader to \cite{arasuhollon},
\cite{arasuseberry2}, \cite{arasuseberry}.

In his 1997 thesis \cite{strassler}, Strassler gave a table of
known results on such matrices with $n \leq 200$ and $k \leq 100$.
Over the years, many open cases in his table have been resolved.
In Tan's 2018 version of the table \cite{tan}, there are 34 open cases remaining.
In this paper, we will show that no $CW(n,k)$ exists for twelve of those cases.

\section{Group Rings and Multipliers}

It is convenient to think of circulant weighing matrices
$CW(n,k)$ as elements of a group ring.
Let $R$ be a commutative ring with identity $i_R$ and
$G$ be a finite multiplicatively written group of order $n$. Let
$R[G]=\{\sum_{g\in G}a_gg\mid a_g\in R\}$ denote the group ring of $G$
over $R$.

\begin{definition}
For an integer $t$ and $A = \sum_{g \in G} a_g g$, define $A^{(t)}  =\sum_{g \in G} a_g g^t$.
\end{definition}

In this paper, we will be working with $\Znum[\Znum_n]$, the group ring of
the cyclic group $\Znum_n$ of order $n$ over  $\Znum$, the
ring of integers. 
A $CW(n,k)$ is an element $A$ of $\Znum[\Znum_n]$ with all
coefficients in $\{0,\pm 1\}$ such that 
\begin{equation}\label{eq:cw}
A  A^{(-1)}=k.
\end{equation}
If the coefficients of $A$ are in $\{0,\pm 1, \pm 2, \ldots, \pm m\}$,
then we will call it an {\em integer circulant weighing matrix},
denoted $ICW_m(n,k)$.

Representing elements of $\Znum_n$ as 
$\{1,X,X^2,\ldots,X^{n-1}\}$ modulo $(X^n-1)$, 
we may think of $CW(n,k)$ as a polynomial in
$\Znum[X]/(X^n-1)$.  For example, 
a circulant weighing matrix $CW(7,4)$
$$ 
\left[
\begin{array}{ccccccc}
 - & + & + & 0 & + & 0 & 0 \\
 0 & - & + & + & 0 & + & 0 \\
 0 & 0 & - & + & + & 0 & + \\
 + & 0 & 0 & - & + & + & 0 \\
 0 & + & 0 & 0 & - & + & + \\
 + & 0 & + & 0 & 0 & - & + \\
 + & + & 0 & + & 0 & 0 & - \\ 
\end{array} 
\right]
$$
is equivalent to
$$
A(X) = -1 +X + X^2 + X^4.
$$
The group ring element is given by the first row of the matrix, and
(\ref{eq:cw}) applies both the the matrix and group ring element.

We will generally  leave the $\bmod (X^n-1)$ implicit.
For any integer $s$, $X^{s} A(X)$ is an equivalent CW, a cyclic shift of $A$.
For an integer $t$, $A^{(t)} = A(X^t)$ denotes the image
of $A$ under the group homomorphism $x\rightarrow x^t$, extended
linearly to all of $\Znum[\Znum_n]$.
If $\gcd(t,n)=1$, then this map is an automorphism.
If $\gcd(t,n)=d$, then $A^{(t)}$ is an $ICW_d(n/d,k)$.

A prime $p$ is called {\em self-conjugate} modulo $n$ if there is an
integer $i$ with 
$$
p^i \equiv -1 \bmod {v(n)},
$$
where $v(n)$ is the largest divisor of $n$ relatively prime to $p$.
The following result of Lander, given in this form for group rings in \cite{jungnickel}, will be
used below.

\begin{theorem}\label{thm:turyn}
For an abelian group $G$ of order $n$, if $A \in \Znum[G]$ satisfies
$$
A A^{(-1)} \equiv 0 \bmod p^{2a},
$$
for a positive integer $a$ and prime $p$, and $p$ is self-conjugate mod $n$
then,
$$
A  \equiv 0 \bmod p^{a}.
$$
\end{theorem}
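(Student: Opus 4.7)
The plan is to pass to characters of the abelian group $G$ and argue place by place in the cyclotomic integer rings over which the characters take values. For each character $\chi$ of $G$, say of order $m \mid n$, extend $\chi$ linearly so that $\chi(A) \in \bbZ[\zeta_m]$, where $\zeta_m$ is a primitive $m$-th root of unity. Applying $\chi$ to the hypothesis gives
$$
\chi(A)\,\overline{\chi(A)} \;=\; \chi(A)\,\chi(A^{(-1)}) \;\equiv\; 0 \pmod{p^{2a}}
$$
in $\bbZ[\zeta_m]$. The goal will be to conclude that $\chi(A) \equiv 0 \pmod{p^a}$ in $\bbZ[\zeta_m]$ for every $\chi$, and then to upgrade this pointwise-on-characters statement to divisibility of $A$ itself in $\bbZ[G]$.

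The heart of the argument is the self-conjugacy hypothesis. Write $m = p^{b} m'$ with $\gcd(p,m')=1$; self-conjugacy of $p$ modulo $v(n)$ passes to self-conjugacy modulo $m'$, which means that some power of the Frobenius at $p$ in $\mathrm{Gal}(\bbQ(\zeta_{m'})/\bbQ)$ coincides with complex conjugation. Consequently, for every prime ideal $\mathfrak{P}$ of $\bbZ[\zeta_m]$ lying over $p$, one has $\overline{\mathfrak{P}} = \mathfrak{P}$. Let $v_{\mathfrak{P}}$ denote the associated discrete valuation, normalized so that $v_{\mathfrak{P}}(p) = e_{\mathfrak{P}}$ is the ramification index. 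The divisibility $\chi(A)\overline{\chi(A)} \equiv 0 \pmod{p^{2a}}$ translates to
$$
v_{\mathfrak{P}}\!\bigl(\chi(A)\bigr) + v_{\mathfrak{P}}\!\bigl(\overline{\chi(A)}\bigr) \;\geq\; 2a\,e_{\mathfrak{P}}.
$$
Because $\overline{\mathfrak{P}} = \mathfrak{P}$, the two summands are equal, and therefore $v_{\mathfrak{P}}(\chi(A)) \geq a\,e_{\mathfrak{P}}$ for every such $\mathfrak{P}$. Hence $\chi(A) \in p^{a}\,\bbZ[\zeta_m]$, as desired.

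To pass from character-by-character divisibility back to $\bbZ[G]$, I would use the standard embedding $\bbZ[G] \hookrightarrow \prod_{\chi} \bbZ[\zeta_{\mathrm{ord}(\chi)}]$ after tensoring with $\bbZ_{(p)}$: if $p \nmid |G|$, the Fourier inversion formula plus the fact that $|G|$ is invertible mod $p$ yields $A \in p^{a}\bbZ[G]$ directly from the divisibilities $\chi(A) \in p^{a}\bbZ[\zeta_{\mathrm{ord}(\chi)}]$. The main technical obstacle is the case $p \mid |G|$, where $|G|$ is not a unit in $\bbZ_{(p)}$ and naive Fourier inversion fails. This can be handled by a decomposition argument: write $G = H \times P$ with $|P|$ a power of $p$ and $\gcd(|H|,p)=1$ (or, if $G$ is not a direct product, reduce modulo a suitable filtration by $p$-subgroups), observe that $\bbZ[G]/p^a \cong (\bbZ[H]/p^a)[P]$, and apply the character argument on $H$ together with a separate analysis on the $p$-group component. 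The self-conjugacy hypothesis, being stated relative to $v(n)$, is exactly what keeps the $p$-part from interfering with the cyclotomic computation.

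In short, the backbone of the proof is short (characters, valuations, self-conjugacy forces conjugation-invariance of prime ideals above $p$), and the only delicate point is the bookkeeping needed when $p$ divides the order of the group so that one can legitimately conclude $A \equiv 0 \pmod{p^a}$ in $\bbZ[G]$ from the character-level conclusions.
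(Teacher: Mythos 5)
A point of reference first: the paper does not prove this theorem --- it is quoted as a known result of Lander, with \cite{jungnickel} cited for the group-ring formulation --- so your proposal can only be judged against the standard argument from the literature. Your cyclotomic backbone \emph{is} that standard argument, and it is correct. For a character $\chi$ of order $m=p^bm'$ with $p\nmid m'$, the decomposition group of a prime above $p$ in $\mathrm{Gal}(\bbQ(\zeta_{m'})/\bbQ)\cong(\Znum/m'\Znum)^\times$ is generated by $p\bmod m'$; since $m'$ divides $v(n)$, the hypothesis $p^i\equiv-1\pmod{v(n)}$ puts complex conjugation inside that group, so every prime above $p$ is conjugation-invariant, and total ramification of $\bbQ(\zeta_m)/\bbQ(\zeta_{m'})$ above $p$ carries this up to $\bbZ[\zeta_m]$. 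The valuation step $2v(\chi(A))\ge 2a\,e$ and the Fourier-inversion step $n\,a_g=\sum_\chi\chi(A)\chi(g^{-1})\in p^a\bbZ[\zeta_n]\cap\bbZ=p^a\bbZ$ are both sound when $p\nmid n$. That is the only case the paper ever uses: every invocation is on a folded image in $\Znum_{10}$, $\Znum_9$, $\Znum_4$, or more generally $\Znum_m$ with $\gcd(p,m)=1$.

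The genuine gap is your final paragraph. The ``bookkeeping'' for $p\mid|G|$ cannot be carried out, because the theorem as literally stated fails there. Take $G=\Znum_{12}=\langle g\rangle$, $p=2$, $a=1$, and $A=1+g^3+g^6+g^9$ (the subgroup of order $4$). Then $AA^{(-1)}=4A\equiv0\pmod{2^2}$, and $2$ is self-conjugate mod $12$ since $v(12)=3$ and $2\equiv-1\pmod 3$; yet $A\not\equiv0\pmod 2$. Your character argument still correctly yields $\chi(A)\equiv0\pmod{p^a}$ for every $\chi$, but when $p$ divides $|G|$ this no longer determines the coefficients modulo $p^a$: the sharp conclusion available there is Ma's lemma, $A=p^aX_1+P_1X_2$ with $P_1$ the subgroup of order $p$ (the example above is exactly $P_1X_2$ with $X_1=0$). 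So your proof is complete and standard under the additional hypothesis $\gcd(p,n)=1$, which covers every use in the paper; but the $p\mid n$ case you defer is not a technicality to be filled in later --- either that coprimality hypothesis must be added to the statement, or the conclusion must be weakened to Ma's form.
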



Next, we will discuss multipliers. 

\begin{definition}
Let $G$ be a finite abelian group
of order $n$ and $D$ be a subset of $G$. Let $t$ be an integer
relatively prime to $n$. If $D^{(t)}=Dg$ for some $g$ in $G$, then $t$
is called a multiplier of $D$.
\end{definition}




The following theorem is well-known; see, for example, \cite{arasuseberry2}:

\begin{theorem}\label{thm:mult}
  Let $A$ be a $CW(n,k)$, where $k=p^{2r}$
  is a prime power, and $\gcd(n,k)=1$.  Then $p$ is a multiplier of
  $A$.  Furthermore, $p$ fixes some translate of $A$.
\end{theorem}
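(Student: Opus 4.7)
The plan is to work in the cyclotomic ring $\Znum[\zeta_n]$ and exploit the Frobenius automorphism at $p$, then transfer the resulting relation back to $\Znum[\Znum_n]$.

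First, I would apply an arbitrary character $\chi$ of $\Znum_n$ to the defining relation $AA^{(-1)}=p^{2r}$. Because $\chi(A^{(-1)})=\overline{\chi(A)}$, this yields $|\chi(A)|^2=p^{2r}$, so $\chi(A)\in\Znum[\zeta_n]$ has complex absolute value $p^r$ in every embedding. In particular, from $(\chi(A))\cdot(\overline{\chi(A)})=(p)^{2r}$, the prime factorization of the principal ideal $(\chi(A))$ involves only primes above $p$.

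Next, since $\gcd(n,p)=1$, the prime $p$ is unramified in $\Znum[\zeta_n]$, and the Frobenius $\sigma_p:\zeta_n\mapsto\zeta_n^p$ lies in the decomposition group of every prime above $p$; hence $\sigma_p$ fixes each such prime setwise and consequently fixes the ideal $(\chi(A))$. Thus $\sigma_p(\chi(A))=u\cdot\chi(A)$ for some unit $u\in\Znum[\zeta_n]^{\times}$. Because both sides have absolute value $p^r$ in every embedding, $u$ lies on the unit circle in every embedding, and Kronecker's theorem forces $u$ to be a root of unity. The roots of unity of $\Znum[\zeta_n]$ are $\pm\zeta_n^t$, so $u=\pm\zeta_n^t$ for some integer $t$; taking $\chi$ to be the primitive character $X\mapsto\zeta_n$ then defines $t$.

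To upgrade this to the group-ring identity $A^{(p)}=X^t A$, I would verify that $\chi(A^{(p)}-X^t A)=0$ for every character of $\Znum_n$, which by the $\Znum$-linear injection $\Znum[\Znum_n]\hookrightarrow\prod_{d\mid n}\Znum[\zeta_d]$ suffices to conclude equality. For characters in the Galois orbit of the chosen primitive one, this is immediate by applying $\sigma_j$ with $\gcd(j,n)=1$. For characters of smaller order $n/d$, I would replace $A$ by its image $\bar{A}\in\Znum[\Znum_{n/d}]$, which still satisfies $\bar{A}\bar{A}^{(-1)}=p^{2r}$, and rerun the whole argument to obtain $\bar{t}_d$ with $\bar{A}^{(p)}=\bar{X}^{\bar t_d}\bar{A}$. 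The main technical obstacle is proving the compatibility $\bar t_d\equiv t\pmod{n/d}$; this is handled by an inductive reduction over divisors of $n$ and a careful comparison of the Frobenius action on the tower of cyclotomic subfields. The $\pm$ sign ambiguity is ruled out globally: if $A^{(p)}=-X^t A$ held in $\Znum[\Znum_n]$, then since $A^{(p)}$ is a permutation of the coefficient multiset of $A$ and $X^t A$ a cyclic shift of it, one would force $|P|=|N|$, contradicting $|P|-|N|=s=p^r>0$.

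Finally, for the claim that $p$ fixes some translate $X^s A$: the condition $(X^s A)^{(p)}=X^s A$ becomes $s(p-1)+t\equiv 0\pmod n$, which is solvable iff $e:=\gcd(p-1,n)$ divides $t$. To prove $e\mid t$, I would pick a character $\chi$ of $\Znum_n$ of order exactly $e$. Since $p\equiv 1\pmod e$, we have $\chi(X)^p=\chi(X)$, and therefore $\chi(A^{(p)})=\chi(A)$. But the multiplier relation also gives $\chi(A^{(p)})=\chi(X)^t\chi(A)$. Since $|\chi(A)|^2=p^{2r}\neq 0$ we may cancel, giving $\chi(X)^t=1$, and because $\chi(X)$ is a primitive $e$-th root of unity, this forces $e\mid t$ and hence produces the required $s$.
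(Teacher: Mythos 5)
The paper itself offers no proof of this theorem: it is quoted as well known with a citation to the Arasu--Seberry survey (it is also the special case $t=p$, $s=1$ of the McFarland multiplier theorem that the paper later states as Theorem~\ref{thm:mcf}). So your proposal has to stand on its own. Its skeleton is the standard one: character sums of modulus $p^r$, the ideal $(\chi(A))$ supported on primes above $p$, the Frobenius $\sigma_p$ fixing those primes because $p$ is unramified, and Kronecker's theorem turning the resulting unit into a root of unity. Your last step --- showing $\gcd(p-1,n)\mid t$ by evaluating a character of order $e=\gcd(p-1,n)$ and cancelling $\chi(A)\neq 0$ --- is also correct and is exactly how the ``fixes a translate'' clause is usually obtained.

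The genuine gap is the step you yourself flag as ``the main technical obstacle'': passing from the per-character relation $\sigma_p(\chi(A))=u_\chi\,\chi(A)$ to a single global relation $A^{(p)}=X^tA$. Your plan is to extract an exponent $\bar t_d$ at each divisor $d\mid n$ and then prove $\bar t_d\equiv t\pmod{n/d}$ by ``a careful comparison of the Frobenius action on the tower of cyclotomic subfields.'' That cannot work as stated: the Frobenius/decomposition-group data only determines the \emph{ideal} $(\chi(A))$, and two generators of the same ideal may differ by an arbitrary root of unity, so the units $u_\chi$ at different levels are in no way constrained by the Galois action alone --- the compatibility of the exponents is precisely the content of the theorem, not something Frobenius gives you. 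The standard way to close this (and the one implicit in the cited sources) is to abandon per-character patching entirely: set $S=A^{(p)}A^{(-1)}$. Your ideal computation shows $(\chi(S))=(\chi(A))(\overline{\chi(A)})=(k)$ for \emph{every} character, including the trivial one where $\chi_0(S)=s^2=k$; Fourier inversion plus $\gcd(n,k)=1$ then forces every integer coefficient of $S$ to be divisible by $k$; Cauchy--Schwarz bounds each coefficient of $S$ by $k$ in absolute value; and since the coefficients sum to $k$, exactly one coefficient equals $k$ and the rest vanish, i.e.\ $S=kX^t$. Multiplying by $A$ and cancelling $k$ gives $A^{(p)}=X^tA$ directly, with no sign ambiguity to dispose of. With that replacement your argument becomes a complete and essentially standard proof.
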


We will frequently use this theorem.  When a $CW(n,k)$ has a
multiplier $p$, then some translate of it is fixed by the group
generated by $p \bmod n$, and so $P$ and $N$ must both be unions of
orbits of $\Znum_n$ under the action of multiplying by $p$.  
For example, $2$ is a multiplier of $CW(7,4)$, and the presentation
given above is fixed by it:
$$
A^{(2)} = A(X^2) = -1 + X^2+ X^4+ X^8 \equiv A \pmod{X^7-1}.
$$
The orbits of $2 \bmod 7$ are $\{0\}$, $\{1,2,4\}$ and $\{3,5,6\}$,
so the only possibilities are $N=\{0\}$ and $P$ being one of the
other orbits, both of which give (equivalent) $CW(7,4)$s.
Needing
to exhaust unions of orbits instead of arbitrary subsets will often
transform an infeasible search into a feasible one, and allow us to
handle the cases in this paper by hand.


Let $n=dm$, with $d,m>1$.  We may reduce $A = \sum_{i=0}^{n-1} a_i X^i$ modulo $X^m-1$ to get
$$
B = \sum_{i=0}^{m-1} \left( \sum_{j=0}^{d-1} a_{i+jm}
\right)X^{i}  = \sum_{i=0}^{m-1} b_i X^{i}.
$$

The $b_i$'s are called the {\em intersection numbers}.
They have been extensively used for studying the existence of difference sets,
circulant weighing matrices (\cite{strassler} called it {\em folding}), and 
supplementary difference sets \cite{dk2015}.

\begin{lemma}\label{lem:subgp}
For an $ICW(n,k)$ circulant weighing matrix $A$ as above,
\begin{align}
\sum_{i=0}^{m-1} b_i & =  s,\label{eq:s}\\ 
\sum_{i=0}^{m-1} b_i^2 & =  s^2 = k,\label{eq:k}
\end{align}
where $|b_i| \leq d$.
\end{lemma}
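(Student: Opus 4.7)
The plan is to push the defining identity $AA^{(-1)} = k$ through the natural ring homomorphism $\phi : \mathbb{Z}[X]/(X^n-1) \to \mathbb{Z}[X]/(X^m-1)$ induced by the identity on $X$. This map is well defined because $m \mid n$ implies $(X^m - 1) \mid (X^n - 1)$, and it sends $A = \sum_{i=0}^{n-1} a_i X^i$ to exactly $B = \sum_{i=0}^{m-1} b_i X^i$ by collecting coefficients along residue classes mod $m$. Since $\phi$ also respects the involution $X \mapsto X^{-1}$ (because $X^{n-1} \equiv X^{m-1} \pmod{X^m - 1}$), applying $\phi$ to the defining relation yields $BB^{(-1)} = k$ in $\mathbb{Z}[\mathbb{Z}_m]$.

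Given this single image identity, I would read off (\ref{eq:k}) by expanding $BB^{(-1)} = \sum_{0 \le i,j \le m-1} b_i b_j X^{i-j}$, reducing modulo $X^m-1$, and equating constant terms: the diagonal contributions give $\sum_{i=0}^{m-1} b_i^2 = k = s^2$, which is (\ref{eq:k}).

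For (\ref{eq:s}) I would use the augmentation map $\varepsilon : \mathbb{Z}[\mathbb{Z}_n] \to \mathbb{Z}$ sending $\sum a_g g \mapsto \sum a_g$. Since $\varepsilon$ is a ring homomorphism with $\varepsilon(A^{(-1)}) = \varepsilon(A)$, the identity $AA^{(-1)} = k$ forces $\varepsilon(A)^2 = s^2$, and the sign convention $|P| \ge |N|$ pins down $\varepsilon(A) = |P| - |N| = s$. Because the reduction to $B$ simply regroups coefficients and so preserves their total, $\sum_{i=0}^{m-1} b_i = \sum_{i=0}^{n-1} a_i = \varepsilon(A) = s$.

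Finally, the bound $|b_i| \le d$ is immediate from the triangle inequality, since each $b_i$ is a sum of $d$ entries $a_{i+jm}$ with $|a_{i+jm}| \le 1$. The whole argument is essentially bookkeeping around $\phi$ and $\varepsilon$, with no substantive obstacle; the one spot that deserves a moment of care is the verification that $\phi$ intertwines the $(-1)$-involution, which rests on the congruence $md - 1 \equiv -1 \pmod{m}$.
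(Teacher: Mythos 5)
Your proof is correct and is exactly the standard argument the paper has in mind: push $AA^{(-1)}=k$ through the quotient homomorphism $\mathbb{Z}[X]/(X^n-1)\to\mathbb{Z}[X]/(X^m-1)$, read off the constant term for (\ref{eq:k}), use the augmentation together with $|P|-|N|=s$ for (\ref{eq:s}), and bound each $b_i$ as a sum of $d$ coefficients in $\{0,\pm1\}$. The paper states this lemma without proof (it is the well-known ``folding'' fact), so there is nothing to contrast; your write-up, including the check that the quotient map intertwines the $(-1)$-involution because $n-1\equiv -1\pmod{m}$, fills that gap correctly.
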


Note that if $d \leq s$, then for any $0 \leq i  < m$, these equations
have a
trivial solution $b_i=s$ and $b_j=0$ for $i \neq j$.  If
Theorem~\ref{thm:turyn} applies with $p^a = s$, then the trivial
solutions are the only ones.

$B(X)$ is an $ICW_d(m,w)$.  If
$A(X)$ is equivalent to $B(X^d)$ for any $d>1$, then $A$ is called a {\em multiple}
of $B(X)$.  
If $A(X)$ is not a multiple of any $CW$, then it is called {\em proper}.

\bigskip



In this paper, we will consider possible
circulant weighing matrices in $\Znum_{n} = \Znum_{d} \times \Znum_{m}$,
where $d$ and $m$ are relatively prime.  If $p$ is a multiplier for a
$CW(n,k)$ matrix $A$, then we may assume that a translate of $A$ is
fixed by the group $\br{p} \pmod n$, and so $P$ and $N$ must each be
the union of orbits of the multiplier group.  This applies to the folded versions in
$\Znum_{d}$ and $\Znum_{m}$ as well, so we may use Lemma~\ref{lem:subgp}
to get information about what orbits are in $P$ and $N$ in the two
subgroups, and from that limit the possibilities for orbits in the
full group.

For any of the cases in Strassler's table, Equations~(\ref{eq:s}) and
(\ref{eq:k}) will be small enough that we can solve them either by
hand or with a short computer exhaust.  The bulk of each proof will
show that none of those pairs of solutions corresponds to a
circulant weighing matrix.

Since this method will be used repeatedly, we will formalize it here.
Let $\sigma$ and $\tau$
be projections from $\Znum_{n}$ to $\Znum_{d}$ and $\Znum_{m}$, respectively.
Let the orbits of $\Znum_{n}$ be
$$
\mcn_1 , \mcn_2 , \ldots \mcn_w,
$$ 
the orbits of $\Znum_{d}$ be
$$
\mcd_1 , \mcd_2 , \ldots \mcd_u,
$$ 
and the orbits of $\Znum_{m}$ be
$$
\mcm_1 , \mcm_2 , \ldots \mcm_v.
$$ 
$\mcb_{ij} = \{\mcb_{ij}^1,\mcb_{ij}^2,\ldots,\mcb_{ij}^l\}$ will denote the set of orbits
$\mcn$ which map to orbits $\mcd_i$ and
$\mcm_j$ under $\sigma$ and $\tau$.
This information may be represented as a matrix shown in Table~\ref{tab:generic},
where the row and
column sums ${\bf r}=(r_1,\ldots,r_u)$ and ${\bf c}=(c_1,\ldots,c_v)$ are the sum of the orders of the orbits in
that line in $P$ minus the sums of the orders in $N$.  
The intersection numbers of Lemma~\ref{lem:subgp} are $r_i/|\mcd_i|$
and $c_j/|\mcm|$.

\begin{table}
\begin{center}
\begin{tabular}{c||c|c|c|c||c}
\multicolumn{4}{c}{\ \ \ \ \ \ \ \ \ \ $\Znum_{m}$}\\
$\Znum_{d}$ & $\mcm_1$ & $\mcm_2$ & $\ldots$ & $\mcm_v$ &  \\ \hline  \hline 
$\mcd_1$      &        $\mcb_{1,1}$           &       $\mcb_{1,2}$           &           &            $\mcb_{1,v}$       & $r_1$ \\ \hline 
$\mcd_2$      &  $\mcb_{2,1}$                 &       $\mcb_{2,2}$           &            &            $\mcb_{2,v}$       & $r_2$ \\ \hline 
$\vdots$      &                  &             &      $\ddots$         &               &  \\ \hline
$\mcd_u$      &           $\mcb_{u,1}$        &   $\mcb_{u,2}$               &             &         $\mcb_{u,v}$          & $r_u$ \\ \hline  \hline 
 & $c_1$ &  $c_2$ &  & $c_v$ &  
\end{tabular}
\caption{Orbit table for $CW(n,k)$}
\label{tab:generic}
\end{center}
\end{table}

To illustrate the method that will be used throughout this paper,
consider a $CW(63,16)$.  By Theorem~\ref{thm:mult} 2 is a multiplier, and
the orbits of $\Znum_{63}$ are shown in Table~\ref{tab:63_4},
where each orbit is represented by its generator in the appropriate
group, and the subscript gives the size of the orbit.
A $CW(63,16)$ is given in Table~\ref{tab:cw}
where the sets in $P$ are in bold.  Note that the intersection numbers
$(4,0,0)$ and $(1,2,-1)$ satisfy Lemma~\ref{lem:subgp}.  Also, 
by Theorem~\ref{thm:turyn} with $n=9$ and $p = 2$ we have
the intersection numbers mod 9 must be trivial.
There is one other inequivalent
solution, which has the same intersection numbers.

\begin{table}
\begin{center}
\begin{tabular}{c||c|c|c||c}
 \multicolumn{4}{c}{\ \ \ $\Znum_7$} \\ 
$\Znum_9$ & $\br{ 0 }_{1}$  & $\br{ 1 }_{3}$  & $\br{ 3 }_{3}$ \\ \hline \hline
$\br{ 0 }_{1}$  & $\br{ 0 }_{1}$  & $\br{ 9 }_{3}$  & $\br{ 27 }_{3}$  &  \\ \hline
$\br{ 1 }_{6}$  & $\br{ 7 }_{6}$  & $\br{ 1 }_{6}$ $\br{ 11 }_{6}$ $\br{ 23 }_{6}$  & $\br{ 5 }_{6}$ $\br{ 13 }_{6}$ $\br{ 31 }_{6}$  &  \\ \hline
$\br{ 3 }_{2}$  & $\br{ 21 }_{2}$  & $\br{ 15 }_{6}$  & $\br{ 3 }_{6}$  &  \\ \hline
\end{tabular}
\end{center}
\label{tab:63_4}
  \caption{$CW(63,4^2)$ Orbits}
\end{table}

\begin{table}
\begin{center}
\begin{tabular}{c||c|c|c||c}
 \multicolumn{4}{c}{\ \ \ $\Znum_7$} \\ 
$\Znum_9$ & $\br{ 0 }_{1}$  & $\br{ 1 }_{3}$  & $\br{ 3 }_{3}$ \\ \hline \hline
$\br{ 0 }_{1}$  & $\mathbf{\br{ 0 }_{1}}$  &  & $\mathbf{\br{ 27 }_{3}}$  & 4  \\ \hline
$\br{ 1 }_{6}$  &  & $\mathbf{\br{ 11 }_{6}}$  & { $\br{ 31 }_{6}$ } & 0  \\ \hline
$\br{ 3 }_{2}$  &  &  &  & 0  \\ \hline
 &  1 &  $3\cdot (2)$ &  $3 \cdot (-1)$ & 
\end{tabular}
\end{center}
\label{tab:cw}
  \caption{Solution representing a $CW(63,4^2)$}
\end{table}

In the next section we will use this framework to demonstrate that
various $CW(n,k)$ do not exist.  For small cases this can be done by
hand.  
A computer search dealing with larger cases will be discussed in Section~\ref{sec:strassler}.

\section{Nonexistence Results}

In this section, we present proofs of nonexistence for several of the
open cases in Strassler's table.  All these proofs may be done by
hand, without computer assistance. In later sections, we will look at
more difficult parameters, where more substantial computation is needed.


\begin{proposition} \label{prop:110-81}
A $CW(110,81)$ does not exist.  
\end{proposition}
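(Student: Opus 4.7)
The plan is to exploit the multiplier $3$ (available by Theorem~\ref{thm:mult}, since $k = 81 = 3^4$ and $\gcd(110, 81) = 1$) together with the fact that, although $3$ is not self-conjugate modulo $11$ or $110$ (it has odd order $5$ modulo $11$, so $-1$ is not a power), it \emph{is} self-conjugate modulo $10$, because $3^2 \equiv -1 \pmod{10}$. After replacing $A$ by its translate fixed by the automorphism $X \mapsto X^3$, I fold $A$ into $\Znum[\Znum_{10}]$ and apply Theorem~\ref{thm:turyn} there.

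Let $B \in \Znum[\Znum_{10}]$ be the image of $A$ modulo $X^{10}-1$. Then $BB^{(-1)} = 81$ and $|b_i| \leq 11$. Applying Theorem~\ref{thm:turyn} with $p = 3$, $a = 2$, $G = \Znum_{10}$ forces $9 \mid b_i$ for every $i$, so each $b_i \in \{-9, 0, 9\}$. Combined with the equations $\sum b_i = 9$ and $\sum b_i^2 = 81$ of Lemma~\ref{lem:subgp}, this leaves only the possibility $B = 9 X^{i_0}$ for a single $i_0$. Invariance of $B$ under the induced multiplier restricts $i_0$ to the fixed points of multiplication by $3$ on $\Znum_{10}$, i.e. $i_0 \in \{0, 5\}$; since the element $55 \in \Znum_{110}$ is fixed by multiplier $3$ but its image in $\Znum_{10}$ is $5$, translating $A$ by $X^{55}$ swaps the two options, and we may assume $i_0 = 0$.

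The main work is then an orbit count in $\Znum_{110}$. Under $\Znum_{110} \cong \Znum_2 \oplus \Znum_5 \oplus \Znum_{11}$, the orbits of $\br{3}$ on $\Znum_5$ are $\{0\}$ and $\{1,2,3,4\}$; on $\Znum_{11}$ they are $\{0\}$, $O^{(1)} = \{1,3,4,5,9\}$ and $O^{(2)} = \{2,6,7,8,10\}$. For each $x_2 \in \{0, 1\}$ this gives orbits in $\Znum_{110}$ of sizes $1,5,5,4,20,20$. The quotient $\Znum_{110} \to \Znum_{10}$ groups these orbits into four bundles indexed by the $\br{3}$-orbits $\{0\}, \{5\}, \{2,4,6,8\}, \{1,3,7,9\}$ on $\Znum_{10}$: the bundles over $\{0\}$ and $\{5\}$ each consist of orbits of sizes $(1, 5, 5)$ contained in a single fiber, while those over the size-$4$ classes each consist of orbits of sizes $(4, 20, 20)$ spread uniformly across four fibers (each size-$20$ orbit meets each of its underlying fibers in exactly $5$ elements). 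Consequently the four equations $b_j = 0$ inside a size-$4$ bundle collapse to the single constraint $\epsilon_{[4]} + 5(\epsilon_{[20]}^{(1)} + \epsilon_{[20]}^{(2)}) = 0$. Over bundle $\{0\}$, the equation $\epsilon_{[1]} + 5(\epsilon_{[5]}^{(1)} + \epsilon_{[5]}^{(2)}) = 9$ in $\{-1,0,1\}^3$ has the unique solution $(-1, 1, 1)$, contributing $(p, n) = (10, 1)$ to $(|P|, |N|)$; over bundle $\{5\}$ the right-hand side is $0$, so the contribution is $(p, n) \in \{(0,0), (5,5)\}$; and each of the two size-$4$ bundles contributes $(p, n) \in \{(0,0), (20,20)\}$.

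Summing, $|P|$ must equal $10 + a + b + c$ with $a \in \{0, 5\}$ and $b, c \in \{0, 20\}$, so $|P| \in \{10, 15, 30, 35, 50, 55\}$. But $|P| = (81+9)/2 = 45$ is not in this set, a contradiction. The main obstacle lies in paragraph~3: one has to verify the fiber-intersection pattern that makes the four $b_j = 0$ equations inside each size-$4$ bundle collapse to a single equation, and check the uniqueness of the $(-1, 1, 1)$ solution over the distinguished fiber. Both are small integer arguments, but the bookkeeping has to be set up carefully.
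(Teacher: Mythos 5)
Your argument is correct, and the first half of it coincides with the paper's: both use the multiplier $3$, the self-conjugacy $3^2\equiv -1\pmod{10}$, and Theorem~\ref{thm:turyn} applied to the fold into $\Znum_{10}$ to force the intersection numbers mod~$10$ to be $(9,0,\dots,0)$ with the $9$ sitting over $0$ (after a shift by $55$). Where you diverge is the endgame. The paper folds a second time, into $\Znum_{11}$, and derives equations $y_1+5(y_2+y_3)=9$, $y_1^2+5(y_2^2+y_3^2)=81$; it then argues that the zero row sums force the first-column orbits of rows $2$--$4$ out of $P\cup N$, hence $y_1=-1$, and checks that no integer solution exists. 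You instead stay in $\Znum_{110}$ and simply count: once each fiber-sum equation is resolved bundle by bundle (the $(1,5,5)$ bundles over $\{0\}$ and $\{5\}$, and the $(4,20,20)$ bundles over the two $4$-element orbits of $\Znum_{10}$), the only achievable values of $|P|$ are $10+a+b+c$ with $a\in\{0,5\}$, $b,c\in\{0,20\}$, which misses $45$. Your finish is more elementary --- no second fold, no Diophantine analysis, just a cardinality obstruction --- and the fiber-intersection bookkeeping you flag as the main obstacle does check out ($\br{11}_4$ meets each fiber over $\{1,3,7,9\}$ once, each $20$-orbit meets each such fiber five times). The trade-off is that the paper's route generalizes: the $\Znum_{11}$ equations are reused verbatim in its next proposition to kill $CW(11m,81)$ for all $m$ with $3$ self-conjugate mod $m$ (e.g.\ $CW(154,81)$), whereas your count depends on the specific orbit sizes at $n=110$ and only yields $|P|\equiv 0\pmod 5$ in general, which does not exclude $45$.
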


\begin{proof}
Suppose a $CW(110,81)$ exists.  
Then $|P|=45,|N|=36$ and 3 is its multiplier.  Note
that $\Znum_{110}=\Znum_{11}\times\Znum_{10}$.  
The $\Znum_{11}$ orbits
under the multiplier action $x\rightarrow3x$ are: \[ \{0\} \] \[
\{1,3,9,5,4\} \] \[ \{2,6,7,10,8\} \]

The $\Znum_{10}$ orbits under the multiplier action $x\rightarrow3x$
are: \[ \{0\} \] \[ \{5\} \] \[ \{1,3,9,7\} \] \[
\{2,6,8,4\} \]


Since $3$ is self-conjugate mod 10, by Theorem~\ref{thm:turyn} the
intersection numbers mod 10 are trivial.
Without loss of generality we may take the first row sum to be 9, and
the others zero (there is no way to get the last two rows to sum to
zero, and if the second row sum was 9 we can shift each element of $P$
and $N$ by 55).
The only way for the first row to sum to 9 is for
$\br{ 0 }_1$ to be in $N$, and 
$\br{ 20 }_5$ and $\br{ 10 }_5$ to be in $P$.

Applying Lemma~\ref{lem:subgp}, we get equations
\begin{equation}\label{eq:110a}
y_1 + 5(y_2+y_3)  =  9 
\end{equation}
and
\begin{equation}\label{eq:110b}
y_1^2 + 5(y_2^2 + y_3^2)  =  81
\end{equation}
for $\Znum_{11}$.
Since for each row the size of the first column orbit
is different from the other two, the other row sums being zero means that 
their first column orbit cannot be in $P$ or $N$, 
so we must have $y_1 = -1$.  But
equations (\ref{eq:110a}) and (\ref{eq:110b}) have no integer
solutions with $y_1 =-1$.

\begin{table}
\begin{center}
\begin{tabular}{c||c|c|c||c}
 \multicolumn{4}{c}{\ \ \ \ \ $\Znum_{11}$} \\
$\Znum_{10}$    & $\br{ 0 }_1$   & $\br{ 1 }_5$ & $\br{ 2 }_5$  &  \\ \hline \hline
$\br{ 0 }_1$  & $\br{ 0 }_1$ & $\br{ 20 }_5$ & $\br{ 10 }_5$ & $9$ \\ \hline
$\br{ 5 }_1$  & $\br{ 55 }_1$ & $\br{ 5 }_5$ & $\br{ 35 }_5$ & $0$ \\ \hline
$\br{ 1 }_4$  & $\br{ 11 }_4$ & $\br{ 3 }_{20}$ & $\br{ 7 }_{20}$  & $0$ \\ \hline
$\br{ 2 }_4$  & $\br{ 22 }_4$ & $\br{ 4 }_{20}$ & $\br{ 2 }_{20}$ & $0$\\ \hline \hline
 & $y_1$  & $5y_2$  & $5y_3$ & \\
\end{tabular}
\caption{Orbit information for $CW(110,81)$}
\label{tab:110}
\end{center}
\end{table}
  
\end{proof}

\bigskip


\begin{proposition} 
Suppose $m$ is an integer for which $\gcd(33,m)=1$, and $3$ is
self-conjugate modulo $m$.  Then no $CW(11\cdot m,81)$ exists.
\end{proposition}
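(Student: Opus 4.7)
The plan is to adapt the proof of Proposition~\ref{prop:110-81} from $m=10$ to arbitrary $m$ satisfying the hypotheses. Let $n=11m$; since $\gcd(33,m)=1$, we have $\gcd(n,81)=1$, so Theorem~\ref{thm:mult} ensures that $3$ is a multiplier of any $CW(n, 81)$. The $\Znum_{11}$-orbits under $\times 3$ remain $\{0\}_1$, $\{1,3,9,5,4\}_5$, and $\{2,6,7,10,8\}_5$, exactly as in Proposition~\ref{prop:110-81}.

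First I apply Theorem~\ref{thm:turyn} to the $\Znum_m$-folding $B$. Since $BB^{(-1)}=81=3^4$ and $3$ is self-conjugate modulo $m$ by hypothesis, we obtain $B\equiv 0\pmod 9$. Combined with Lemma~\ref{lem:subgp}, the only feasible $\Znum_m$-intersection numbers are the trivial ones: a single intersection number equal to $9$ and the rest equal to $0$. After a cyclic shift of $A$, I may assume $b_0=9$. The fiber of $A$ over $0\in\Znum_m$ is the cyclic subgroup $H=\langle m\rangle\subseteq\Znum_n$ of order $11$, on which $3$ acts with orbits of sizes $1,5,5$. Summing to $9$ with coefficients in $\{-1,0,1\}$ constant on these orbits forces the same configuration as in Proposition~\ref{prop:110-81}: the singleton $0$ lies in $N$, and both five-element orbits lie in $P$.

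Next I pass to the $\Znum_{11}$-folding $C$, whose intersection numbers $(y_1,y_2,y_3)$ satisfy $y_1+5(y_2+y_3)=9$ and $y_1^2+5(y_2^2+y_3^2)=81$. Following the orbit-table reasoning of Proposition~\ref{prop:110-81}: for each $\Znum_m$-orbit $\mcm\ne\{0\}$ of size $\ell$, the corresponding row has row sum $0$, its first-column cell is a single $\Znum_{11m}$-orbit of size $\ell$, and each of the other two columns contains orbits of size $\operatorname{lcm}(5,\ell)$. When $5\nmid\ell$ the non-first-column contributions are multiples of $5\ell$, so the first-column orbit must contribute $0$. If every $\mcm\ne\{0\}$ satisfies $5\nmid\ell$, then only the $-1$ from the singleton row survives, giving $y_1=-1$. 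Substituting yields $y_2+y_3=2$ and $y_2 y_3=-6$, whose discriminant $28$ is not a perfect square---no integer solutions exist, contradiction.

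The main obstacle is handling orbits with $5\mid\ell$, where the first- and other-column orbit sizes coincide and the simple divisibility argument breaks. Here I would exploit self-conjugacy to note that $\ell$ is even for every $\mcm\ne\{0\}$ with $\ell>1$, so $5\mid\ell$ forces $10\mid\ell$; the cumulative contribution of such rows to $y_1$ is therefore a multiple of $10$, giving $y_1\equiv -1\pmod{10}$. Combined with $|y_1|\le 9$ coming from $y_1^2\le 81$, this leaves only $y_1\in\{-1,9\}$. The case $y_1=9$ forces $y_2=y_3=0$ and hence $C(X)=9$, which I intend to rule out by a parity analysis of $c_i$ for $i\in\{1,3,4,5,9\}$: the determined $+1$ contribution $a_{(i,0)}$ is odd while every other contribution (from Case-A orbits with $\ell>1$ even and from Case-B orbits contributing multiples of $\ell/5$, itself even) is even, so $c_i$ is forced to be odd and cannot vanish.
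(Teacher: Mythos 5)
Your main line of argument tracks the paper's proof of this proposition: Theorem~\ref{thm:turyn} forces trivial intersection numbers mod $m$, a shift puts the $9$ at $b_0$, the order-$11$ subgroup is then forced to have $\br{0}_1$ in $N$ and both $5$-orbits in $P$, and for rows whose $\Znum_m$-orbit size $\ell$ is not divisible by $5$ the size count $(\ell,5\ell,5\ell)$ kills the first-column contribution, leading to $y_1=-1$ and the unsolvable pair (\ref{eq:110a})--(\ref{eq:110b}). (The paper in fact asserts the $(o,5o,5o)$ pattern for \emph{every} row and stops at $y_1=-1$; you are right that this pattern fails when $5\mid\ell$, since then each cell in the row splits into orbits of size $\ell$ rather than $5\ell$, and such $m$ do satisfy the hypotheses --- e.g.\ $m=31$, where $3$ has order $30$. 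Your mod-$10$ reduction of $y_1$ to $\{-1,9\}$ for those rows, using self-conjugacy to get $2\mid\ell$ and hence $10\mid\ell$, is correct and is genuinely more careful than the printed argument.)

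The gap is in your disposal of the case $y_1=9$. Your parity claim --- that every contribution to $c_i$ ($i\neq 0$) other than the forced $+1$ from the order-$11$ subgroup is even --- omits the singleton $\Znum_m$-orbit $\{m/2\}$, which exists whenever $m$ is even and is not covered by either of your two cases (it is a Case-A row with $\ell=1$, not $\ell>1$). Its cells in the two $5$-columns each consist of a single orbit of size $5$, contributing $0$ or $\pm 1$ to each $c_i$, i.e.\ possibly an odd amount. Worse, in the $y_1=9$ scenario the column-sum constraint $5y_2=5y_3=0$, together with the $+5$ already contributed to each of columns $2$ and $3$ by the subgroup of order $11$ and the fact that all remaining column contributions are multiples of $10$, actually \emph{forces} one of these two size-$5$ orbits into $P$ and the other into $N$; then $c_i=1\pm 1+(\mathrm{even})$ is even and parity yields no contradiction. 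Parameters where both features coexist are easy to produce, e.g.\ $m=62$: here $3^{15}\equiv -1\pmod{62}$ so $3$ is self-conjugate, $\gcd(33,62)=1$, the orbit of $1$ has size $30$ (so $y_1=9$ is not excluded by your mod-$10$ step), and the singleton $\{31\}$ is present. So the subcase ``$y_1=9$ with $m$ even'' remains open in your write-up and needs a different argument (for odd $m$ your parity argument does close it).
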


\begin{proof}

For any $n = 11 \cdot m$, we may make a table of orbits similar to
Table~\ref{tab:110}.  Since $\gcd(3,m)=1$, 3 is a multiplier.
The orbits mod $11$ will be the same, and since
3 is self-conjugate mod $m$,, by Theorem~\ref{thm:turyn}
the intersection numbers mod $m$ must be
trivial.  As before the first row sum must be 9,
so that again the $\br{0}_1$ orbit must be in $N$. 
All the other
row sums are $0$, and since each row has orbits of size 
$o$, $5o$ and $5o$, that means that the orbit in the first column
cannot be in $P$ or $N$.  Therefore $y_1$ in 
equations (\ref{eq:110a}) and (\ref{eq:110b}) would need to be $-1$,
and those equations still have no such integer solutions.

\end{proof}

This rules out many such parameters, one of which is in Strassler's
table and was open:

\begin{corollary}
A $CW(154,81)$ does not exist.  
\end{corollary}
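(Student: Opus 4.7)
The plan is to derive this corollary as a direct application of the preceding proposition to $n = 154 = 11 \cdot 14$, so that $m = 14$. To invoke the proposition, I need to verify two hypotheses: that $\gcd(33, 14) = 1$, and that $3$ is self-conjugate modulo $14$.

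First I would note that $33 = 3 \cdot 11$ and $14 = 2 \cdot 7$ share no prime factors, so $\gcd(33,14) = 1$ is immediate. Next, since $\gcd(3,14)=1$, we have $v(14) = 14$, and I would compute the powers of $3$ modulo $14$: $3^1 \equiv 3$, $3^2 \equiv 9$, and $3^3 \equiv 27 \equiv -1 \pmod{14}$. Hence $3$ is self-conjugate modulo $14$, with witness $i=3$.

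With both hypotheses verified, the proposition applied to $m=14$ yields that no $CW(11 \cdot 14, 81) = CW(154,81)$ exists, completing the proof. The only potential subtlety is the self-conjugacy check, but this is resolved by a single direct calculation, so there is essentially no obstacle in this argument; the entire content of the corollary is contained in the preceding proposition.
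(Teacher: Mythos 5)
Your proof is correct and matches the paper's intended derivation: the corollary is stated as an immediate consequence of the preceding proposition, with $154 = 11\cdot 14$, $\gcd(33,14)=1$, and $3^3 = 27 \equiv -1 \pmod{14}$ giving self-conjugacy of $3$ modulo $14$. The paper leaves these verifications implicit, but your explicit check is exactly what is needed and nothing more.
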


\bigskip

The same method, with different orbits, may be used for other parameters.


\begin{proposition} A $CW(130,81)$ does not exist.  \end{proposition}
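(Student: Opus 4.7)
The plan is to adapt the framework of Proposition~\ref{prop:110-81} to the factorization $\Znum_{130}\cong\Znum_{10}\times\Znum_{13}$. Theorem~\ref{thm:mult} gives that $3$ is a multiplier, and its orbits are $\{0\},\{5\},\{1,3,7,9\},\{2,4,6,8\}$ on $\Znum_{10}$ (sizes $1,1,4,4$) and $\{0\},\{1,3,9\},\{2,5,6\},\{4,10,12\},\{7,8,11\}$ on $\Znum_{13}$ (sizes $1,3,3,3,3$). Since $3^{2}\equiv -1\pmod 5$ and $v(10)=5$, $3$ is self-conjugate mod~$10$, so Theorem~\ref{thm:turyn} together with $\sum b_i^{2}=81$ forces exactly one mod-$10$ intersection number to be $\pm 9$ with the rest zero. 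A size-$4$ orbit cannot support it, so after a shift by $65$ and a possible sign flip we may assume Row~$1$ has sum $9$ and every other row has sum $0$. A row-by-row inspection then fixes column $\mcd_{1}$: in Row~$1$ the sum $9$ forces $\{0\}$ into neither $P$ nor $N$ with three of the four size-$3$ cells lying in $P$, while in each zero-sum row the $\mcd_{1}$-cell (size $1$ for Row~$2$, size $4$ for Rows~$3,4$) cannot be balanced by multiples of $3$ or $12$ and must be neutral. Therefore $y_{1}=0$, and Lemma~\ref{lem:subgp} on the mod-$13$ reduction yields $\sum_{i=2}^{5}y_{i}=3$ and $\sum_{i=2}^{5}y_{i}^{2}=27$, whose integer solutions up to permutation are $(3,3,-3,0)$, $(5,0,-1,-1)$, and $(4,1,1,-3)$.

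The first two solutions fall to a parity argument. Each $y_{j}$ decomposes as $\delta_{j}+\alpha_{j}+4\beta_{j}+4\gamma_{j}$, where $\delta_{j}\in\{0,1\}$ records whether Row~$1$ contributes $+3$ to column $\mcd_{j}$ (with exactly one $\delta_{j}=0$), $\alpha_{j}\in\{-1,0,1\}$ comes from Row~$2$, and $\beta_{j},\gamma_{j}\in\{-1,0,1\}$ come from Rows~$3,4$, so $y_{j}-\delta_{j}\not\equiv 2\pmod 4$. Both $(3,3,-3,0)$ and $(5,0,-1,-1)$ contain two entries $\equiv 3\pmod 4$; each requires $\delta_{j}=0$, yet only one column has $\delta_{j}=0$---a contradiction.

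The main obstacle is $(4,1,1,-3)$, which passes this parity test. For that case, I would work inside $\Znum[\Znum_{13}]$ with $B=\sum_{i=2}^{5}y_{i}T_{i}$, $T_{i}=\sum_{j\in\mcd_{i}}X^{j}$, and impose $BB^{(-1)}=81$ in the group ring. Using $T_{2}^{2}=T_{3}+2T_{4}$, $T_{2}T_{3}=T_{2}+T_{3}+T_{5}$, $T_{2}T_{4}=3+T_{3}+T_{5}$ and their cyclic images under the multiplier-$2$ permutation $\mcd_{i}\mapsto\mcd_{i+1}$, a direct expansion yields
\[BB^{(-1)}=81\cdot 1+C_{U}(T_{2}+T_{4})+C_{V}(T_{3}+T_{5}),\]
for explicit integer quadratic forms $C_{U},C_{V}$ in the $y_{i}$; summing all coefficients forces $C_{U}+C_{V}=0$, so the condition collapses to $C_{U}=0$. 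Multiplier~$2$ acts cyclically on $(y_{2},y_{3},y_{4},y_{5})$, giving three $\Znum_{4}$-inequivalent orderings of $(4,1,1,-3)$, and computing $C_{U}$ on one representative from each yields only nonzero values. This final check rules out the remaining case and completes the proof.
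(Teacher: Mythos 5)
Your proposal is correct, and it follows the paper's framework (orbit table for $\Znum_{130}\cong\Znum_{10}\times\Znum_{13}$, self-conjugacy of $3$ mod $10$ to force row sums $(9,0,0,0)$, exclusion of the first column, then the $\Znum_{13}$ intersection-number equations). But it is actually \emph{more} complete than the paper's own argument, and the extra work you did is genuinely needed. The paper asserts that the only solutions of $\sum y_i=3$, $\sum y_i^2=27$ are permutations of $(3,3,-3,0)$; that is false --- $(5,0,-1,-1)$ and $(4,1,1,-3)$ are also solutions, as the paper itself records for the identical equations in its $CW(143,81)$ proof. Your mod-$4$ decomposition $y_j=\delta_j+\alpha_j+4\beta_j+4\gamma_j$ (which, unlike the paper, does not even need the separate step excluding the second row) correctly kills $(3,3,-3,0)$ and $(5,0,-1,-1)$, since each has two coordinates $\equiv 3\pmod 4$ but only one column can have $\delta_j=0$. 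The case $(4,1,1,-3)$ survives every constraint the paper states ($\delta=(0,1,1,1)$, $4(\beta+\gamma)=(4,0,0,-4)$ is realizable consistently with $|P|=45$, $|N|=36$ and zero row sums), so a further check is unavoidable; your group-ring computation in $\Znum[\Znum_{13}]$ is the right tool and is exactly parallel to the verification the paper performs for $CW(143,81)$. I checked your orbit products ($T_{\br{1}}^2=T_{\br{2}}+2T_{\br{4}}$, $T_{\br{1}}T_{\br{2}}=T_{\br{1}}+T_{\br{2}}+T_{\br{7}}$, $T_{\br{1}}T_{\br{4}}=3+T_{\br{2}}+T_{\br{7}}$, cyclically propagated by the multiplier $2$), the identity $C_U+C_V=0$, and the three cyclic representatives $(4,1,1,-3)$, $(4,1,-3,1)$, $(4,-3,1,1)$, which give $C_U=6$, $-18$, $-6$ respectively --- all nonzero, as you claimed. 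Two trivial quibbles: $v(10)=10$ rather than $5$ (the conclusion is unaffected since $3^2\equiv-1\pmod{10}$), and the final case is described as a plan rather than written out, but the computation does go through.
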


\begin{proof}

The orbit information is given in Table~\ref{tab:130}.  While $3$ is
self-conjugate modulo $10$, the orbit structure is different, so
the argument is not quite as straightforward.

Without loss of generality, by Theorem~\ref{thm:turyn} we take the first row sum to be 9,
so three of the four 
3-orbits must be in $P$. 
The other row sums are 0, so the first column can never be included,
and the other four columns for each row must have the same number of
orbits in $N$ and $P$.  
No orbits can come from the second row, since $N$ has order 
$36 \equiv 0 \pmod 12$, and all the other orbits are 12-orbits.

The $\Znum_{13}$ equations from Lemma~\ref{lem:subgp} are:
\begin{equation}\label{eq:130a}
y_{1} + 3 y_{2} + 3 y_{3} + 3 y_{4} + 3 y_{5} = 9
\end{equation}
and
\begin{equation}\label{eq:130b}
y_{1}^2 + 3 y_{2}^2 + 3 y_{3}^2 + 3 y_{4}^2 + 3 y_{5}^2  =  81.
\end{equation}

The only solutions with $y_1=0$ are permutations of $(0,3,3,-3,0)$.
But there is no way to get a column sum of $-3$ with zero or one
3-orbits from the first row in $P$ and some number of 12-orbits in $P$
or $N$.

\begin{table}
\begin{center}
\begin{tabular}{c||c|c|c|c|c||c}
\multicolumn{4}{c}{\ \ \ \ \ \ \ \ \ \ $\Znum_{13}$}\\
$\Znum_{10}$ & $\br{ 0 }_{1}$ & $\br{ 1 }_{3}$ & $\br{ 2 }_{3}$ & $\br{ 4 }_{3}$ & $\br{ 7 }_{3}$ &  \\ \hline  \hline 
$\br{ 0 }_{1}$ & $\br{ 0 }_{1}$ & $\br{ 40 }_{3}$ & $\br{ 70 }_{3}$ & $\br{ 10 }_{3}$ & $\br{ 20 }_{3}$ & 9 \\ \hline 
$\br{ 5 }_{1}$ & $\br{ 65 }_{1}$ & $\br{ 35 }_{3}$ & $\br{ 5 }_{3}$ & $\br{ 25 }_{3}$ & $\br{ 85 }_{3}$ & 0 \\ \hline
$\br{ 1 }_{4}$ & $\br{ 13 }_{4}$ & $\br{ 3 }_{12}$ & $\br{ 19 }_{12}$ & $\br{ 17 }_{12}$ & $\br{ 7 }_{12}$ & 0 \\ \hline 
$\br{ 2 }_{4}$ & $\br{ 26 }_{4}$ & $\br{ 14 }_{12}$ & $\br{ 2 }_{12}$ & $\br{ 4 }_{12}$ & $\br{ 8 }_{12}$ & 0 \\ \hline   \hline 
 & $y_1$ & $3y_2$ & $3y_3$ & $3y_4$ & $3y_5$ &  
\end{tabular}
\caption{Orbit information for $CW(130,81)$}
\label{tab:130}
\end{center}
\end{table}

\end{proof}

\bigskip


\begin{proposition} A $CW(143,81)$ does not exist.  \end{proposition}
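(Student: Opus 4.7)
The plan is to combine the orbit-table method of the preceding propositions with a deeper fold, since the $\Znum_{11}$-fold alone does not suffice. Note that $143 = 11 \cdot 13$, $\gcd(143,3) = 1$, and $k = 81 = 3^4$, so by Theorem~\ref{thm:mult} 3 is a multiplier. Since 3 has order $5$ mod $11$ and order $3$ mod $13$, the orbits of $\Znum_{143} \cong \Znum_{11} \times \Znum_{13}$ under $x \mapsto 3x$ have sizes $1$, $3$, $5$, or $15$: one of size $1$, four of size $3$, two of size $5$, and eight of size $15$. A short mod-$3$ argument on $|P| = 45$ and $|N| = 36$, together with these multiplicities, leaves only two possible orbit-decompositions: (i)~$P$ is three 15-orbits and $N$ is two 3-orbits together with two 15-orbits, or (ii)~$P$ is three 15-orbits and $N$ is the 1-orbit, one 5-orbit, and two 15-orbits.

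Case~(ii) is ruled out by the $\Znum_{11}$-fold alone. Letting $s_i = \sum_j \gamma_{i,j}$ be the row sums of the 15-orbit configuration, with (WLOG) the 5-orbit of $N$ in the first $\Znum_{11}$-row, Lemma~\ref{lem:subgp} gives $b_0 = -1$, $b_* = -1 + 3 s_1$, $b_{**} = 3 s_2$; the two equations reduce after $s_2 = 1 - s_1$ to $3 s_1^2 - 4 s_1 - 1 = 0$, whose discriminant $28$ is not a square, so no integer solution exists. For Case~(i), the $\Znum_{11}$-fold gives $b_0 = -6$ and forces $s_1 + s_2 = 1$ and $s_1^2 + s_2^2 = 1$, hence (up to swapping 5-orbits) $(s_1,s_2) = (1,0)$. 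A direct enumeration of the row-distributions of $\gamma_{i,j} \in \{-1,0,1\}$ compatible with three $+1$'s and two $-1$'s, combined with the $\Znum_{13}$-fold equations $c_0 = 0$, $\sum c^{(j)} = 3$, $\sum (c^{(j)})^2 = 27$, pins the folded element down to
\[
B = 5 S_{j_0} - S_{j_a} - S_{j_b},
\]
where $S_1, \ldots, S_4$ are the four $\Znum_{13}$-orbit sums of $x \mapsto 3x$, $j_0 \in \{1,2,3,4\}$, and $\{j_a, j_b\}$ is a 2-subset of $\{1,2,3,4\} \setminus \{j_0\}$.

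The main obstacle is to show that no such $B$ satisfies $B B^{(-1)} = 81$ in $\Znum[\Znum_{13}]$. The $\Znum_{13}^*$-action $x \mapsto 2x$ permutes the four $S_j$ cyclically, so the twelve choices of $(j_0, \{j_a, j_b\})$ fall into three orbits, and it suffices to verify three representatives. For each, direct expansion using the multiplication table of the $S_j$ (derived by inspection: e.g.\ $S_1^2 = S_2 + 2 S_3$ and $S_1 S_3 = 3 + S_2 + S_4$) shows that $B B^{(-1)} - 81$ is a nonzero multiple of $(S_2 + S_4) - (S_1 + S_3)$; equivalently, evaluating at a nontrivial character $\psi$ of $\Znum_{13}$ and using the Gauss periods $\alpha = \psi(S_1)$, $\beta = \psi(S_2)$ with $|\alpha|^2 = (5 - \sqrt{13})/2$ and $|\beta|^2 = (5 + \sqrt{13})/2$, one obtains $|\psi(B)|^2 \in \{81 - 12\sqrt{13},\, 81 - 18\sqrt{13}\}$, which is never $81$. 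This contradicts $|\chi(A)|^2 = 81$ for every character $\chi$ of $\Znum_{143}$, ruling out Case~(i) and completing the proof.
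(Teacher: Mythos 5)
Your proof is correct and follows essentially the same route as the paper's: invoke the multiplier $3$, fold to $\Znum_{11}$ and $\Znum_{13}$ to reduce to the family $B=5S_{j_0}-S_{j_a}-S_{j_b}$ (your case~(ii) is exactly the paper's observation that a first-row sum of $-1$ admits no integer solution to the $\Znum_{11}$ intersection-number equations, via the same non-square discriminant $28$), and then show $BB^{(-1)}\neq 81$. Your two refinements of the endgame --- cutting the twelve final checks to three via the orbit-permuting automorphism $x\mapsto 2x$, and recognizing $BB^{(-1)}-81$ as a nonzero multiple of the quadratic Gauss-sum element so that $|\psi(B)|^2=81\pm c\sqrt{13}\neq 81$ --- are correct and tidier than the paper's direct verification of all twelve cases.
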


\bigskip

\begin{proof}
Suppose a $CW(143,81)$ exists; 
$|P|=45,|N|=36$ and 3 is its multiplier.  Note
that $\Znum_{143}=\Znum_{11}\times\Znum_{13}$.  The $\Znum_{11}$ orbits
under the multiplier action $x\rightarrow3x$ are: \[ \{0\} \] \[
\{1,3,9,5,4\} \] \[ \{2,6,7,10,8\} \]

The $\Znum_{13}$ orbits under the multiplier action $x\rightarrow3x$
are: \[ \{0\} \] \[ \{1,3,9\} \] \[ \{2,6,5\} \] \[
\{4,12,10\} \] \[ \{7, 8,11\} \]
Table~\ref{tab:143} gives the orbit information.

\begin{table}
\begin{center}
\begin{tabular}{c||c|c|c|c|c||c}
\multicolumn{4}{c}{\ \ \ \ \ \ \ \ \ \ $\Znum_{13}$}\\
$\Znum_{11}$ & $\br{ 0 }_{1}$ & $\br{ 1 }_{3}$ & $\br{ 2 }_{3}$ & $\br{ 4 }_{3}$ & $\br{ 7 }_{3}$ &  \\ \hline  \hline 
$\br{ 0 }_{1}$ & $\br{ 0 }_{1}$ & $\br{ 22 }_{3}$ & $\br{ 44 }_{3}$ & $\br{ 77 }_{3}$ & $\br{ 11 }_{3}$ & $x_1$ \\ \hline 
$\br{ 1 }_{5}$ & $\br{ 26 }_{5}$ & $\br{ 1 }_{15}$ & $\br{ 5 }_{15}$ & $\br{ 4 }_{15}$ & $\br{ 20 }_{15}$ & $5x_2$ \\ \hline 
$\br{ 2 }_{5}$ & $\br{ 13 }_{5}$ & $\br{ 29 }_{15}$ & $\br{ 2 }_{15}$ & $\br{ 10 }_{15}$ & $\br{ 7 }_{15}$ & $5x_3$ \\ \hline   \hline 
 & $x$ & $3y_0$ & $3y_1$ & $3y_2$ & $3y_3$ &  
\end{tabular}
\caption{Orbit information for $CW(143,81)$}
\label{tab:143}
\end{center}
\end{table}


Unfortunately, 3 is not self-conjugate modulo 11 or 13, so we need to
work a bit harder.

Applying Lemma~\ref{lem:subgp}, we get equations
\begin{eqnarray*}
y_1 + 3(y_2 + y_3 + y_4 + y_5) & = & 9 \\  
y_1^2 + 3(y_2^2 + y_3^2 + y_4^2 + y_5^2) & = & 81 \\  
\end{eqnarray*}
for $\Znum_{13}$, and
\begin{eqnarray*}
x_1 + 5(x_2+x_3) & = & 9 \\  
x_1^2 + 5(x_2^2 + x_3^2) & = & 81 \\  
\end{eqnarray*}
for $\Znum_{11}$.

For $\Znum_{11}$ the integer solutions are
$(9,0,0)$, $(4,3,-2)$, and $(-6,3,0)$ (together with
swapping the second and third coordinates).

The first one is impossible, since it forces three size-3 orbits in
the first row to be in $P$, leaving 36 remaining elements, but the
orbits in the other rows all have size a multiple of 5.  Similarly for
the second solution, the first row sum being 4 means that we either
have orbits in the first row contributing 4 (the size-1 and a size-3
orbit in $P$, and none in $N$) or 7 (the size-1 and two size-3 orbits
in $P$, and the remaining orbit in $N$), but again the number of
remaining elements is not a multiple of 5.

Finally for $(x_1,x_2,x_3) = (-6,3,0)$, the first row is forced to have two
size-3 orbits in $N$ and none in $P$, or three in $N$ and one in $P$.
The latter is impossible, since it would leave 42 elements in $P$ and
27 in $N$ to be covered by the orbits in the other rows, all of which
have size a multiple of 5.  The former leaves 45 elements of $P$
and 30 of $N$ for the other rows, and so one row must have two size-15
orbits in $P$ and one in $N$, while the other has one of each.  The
size-5 orbits cannot be used, so $x$ must be $0$.

There are three solutions to the $\Znum_{13}$ equations with $y_1=0$:
$(0,3,3,0,-3)$,
$(0,4,1,1,-3)$, and
$(0,5,0,-1,-1)$,
as well as permutations of the last four coordinates.

The first two may be quickly eliminated; in both cases we need a
column sum equal to $-9$, and it is not possible to achieve this.
However, the third solution can be satisfied.  Table~\ref{tab:143b}
shows a selection satisfying all the equations, although the orbits do
not form a $CW(143,81)$.

\begin{table}
\begin{center}
\begin{tabular}{c||c|c|c|c|c||c}
\multicolumn{4}{c}{\ \ \ \ \ \ \ \ \ \ $\Znum_{13}$}\\
$\Znum_{11}$ & $\br{ 0 }_{1}$ & $\br{ 1 }_{3}$ & $\br{ 2 }_{3}$ & $\br{ 4 }_{3}$ & $\br{ 7 }_{3}$ &  \\ \hline  \hline 
$\br{ 0 }_{1}$ & \phantom{$\br{ 0 }_{1}$} & \phantom{$\br{ 22 }_{3}$} & {$\br{ 44 }_{3}$} & {$\br{ 77 }_{3}$} & \phantom{$\br{ 11 }_{3}$} & $-6$ \\ \hline 
$\br{ 1 }_{5}$ & \phantom{$\br{ 26 }_{5}$} & {$\mathbf {\br{ 1 }_{15}}$} & {$\br{ 5 }_{15}$} & {$\mathbf{\br{ 4 }_{15}}$} & \phantom{$\br{ 20 }_{15}$} & $15$ \\ \hline 
$\br{ 2 }_{5}$ & \phantom{$\br{ 13 }_{5}$} & \phantom{$\br{ 29 }_{15}$} & {$\mathbf{\br{ 2 }_{15}}$} & {$\br{ 10 }_{15}$} & \phantom{$\br{ 7 }_{15}$} & $0$ \\ \hline   \hline 
 & $0$ & $15$ & $-3$ & $-3$ & $0$ &  
\end{tabular}
\caption{A choice of orbits satisfying the equations ({\em not} a
  $CW(143,81)$).   {$P$ orbits are in {\bf bold}}}
\label{tab:143b}
\end{center}
\end{table}

To finish the proof, consider the orbits in $\Znum_{13}$.  There are
six ways to pick two of the four columns with column sum $-3$, and then
two ways to pick which of the other columns has sum 15.  For each of
these 12 choices, we can check that (\ref{eq:cw}) is not satisfied.
For example, the choices in Table~\ref{tab:143b} give
$$
A(X) \equiv 5 X^{\br{ 1 }} - X^{\br{ 2 }} - X^{\br{ 4 }} \pmod{X^{13}-1},
$$
where $X^{\br{a}} = \sum_{b \in \br{a}} X^b$ for the orbit $\br{a}$ in
$\Znum_{13}$,
and we find
$$
A(X) A(X^{-1}) \equiv 81 + 18 \left(X^{\br{ 1 }} + X^{\br{ 2 }} -
X^{\br{ 4 }} + X^{\br{ 7 }} \right)  \pmod{X^{13}-1} \neq 81.
$$

\end{proof}

Finally, we have:

\begin{proposition} A $CW(143,36)$ does not exist.  \end{proposition}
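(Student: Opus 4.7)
The plan is to exploit the fact that $k=36=6^{2}$ with $\gcd(36,143)=1$, so $s=6$, $|P|=21$, and $|N|=15$. By the extended multiplier theorem for circulant weighing matrices --- any prime divisor of $s$ coprime to $n$ is a multiplier of a $CW(n,s^{2})$ --- the prime $2$ is a multiplier of every putative $CW(143,36)$. This is the one place the proof departs from the pattern of the previous propositions, which all used $3$ (available from Theorem~\ref{thm:mult} since their $k$'s were prime powers).

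Next I would compute the orbits of multiplication by $2$ on $\Znum_{143}\cong \Znum_{11}\times\Znum_{13}$. Since $2$ is a primitive root modulo both $11$ and $13$, its order modulo $143$ is $\mathrm{lcm}(10,12)=60$. By the Chinese Remainder Theorem, the five $\langle 2\rangle$-orbits are the singleton $\{0\}$, one orbit of size $10$ (the nonzero multiples of $13$), one orbit of size $12$ (the nonzero multiples of $11$), and two orbits of size $60$ partitioning the units modulo $143$.

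After translating so that $A$ is fixed by the multiplier $2$, both $P$ and $N$ must be disjoint unions of these five orbits. The proof then reduces to the elementary subset-sum observation that the only sums of subsets of $\{1,10,12,60,60\}$ not exceeding $36$ are $0,1,10,11,12,13,22,23$; neither $15$ nor $21$ occurs. Hence the required sizes $|P|=21$ and $|N|=15$ cannot be realised, which is the desired contradiction.

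The only delicate point, and the real content of the argument, is justifying that $2$ is a multiplier despite $k=36$ not being a prime power. Once this is granted, the rest is a pure orbit-size count with no equations to solve --- no intersection-number casework is needed, unlike in the proof for $CW(143,81)$. An alternative route, should one wish to stay strictly within the scope of Theorem~\ref{thm:mult}, would be to run the longer orbit-table analysis with $3$ as the multiplier; but the $2$-orbits give such a restrictive partition of $143$ that the subset-sum route is by far the cleanest.
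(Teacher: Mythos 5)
Your argument collapses at the one point you yourself identify as its real content: the claim that $2$ is a multiplier of a $CW(143,36)$. There is no theorem asserting that ``any prime divisor of $s$ coprime to $n$ is a multiplier of a $CW(n,s^2)$.'' Theorem~\ref{thm:mult} applies only when $k$ is a prime power, and the general multiplier theorems for composite $k$ (McFarland's Theorem~\ref{thm:mcf}, or the Arasu--Xiang result the paper invokes) require $t$ to be congruent modulo $n$ to a power of \emph{each} prime divisor of $k$. For $n=143$ and $k=36=2^2\cdot 3^2$ this means $t\equiv 2^{f_1}\equiv 3^{f_2}\pmod{143}$. One checks that $\langle 3\rangle$ has order $15$ in $(\Znum/143\Znum)^\times$ while $\langle 2\rangle$ has order $60$ and contains $\langle 3\rangle$; consequently $3$ is a power of $2$ modulo $143$, but $2$ is \emph{not} a power of $3$ modulo $143$ (its powers are $\{1,3,9,27,81,100,14,42,126,92,133,113,53,16,48\}$). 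So the machinery yields $3$ as a multiplier but gives you no way to claim $2$ is one, and the entire subset-sum count over the orbits $\{1,10,12,60,60\}$ of $\langle 2\rangle$ has no foundation. (That count is correct \emph{conditionally}: neither $15$ nor $21$ is a subset sum, which is exactly why it would be so clean if only the premise held.)

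The paper instead uses $3$ as the multiplier, obtained from the Arasu--Xiang theorem, and reuses the orbit table already built for $CW(143,81)$. The $\langle 3\rangle$-orbits have sizes $1$, four of size $3$, two of size $5$, and eight of size $15$, so both $|P|=21$ and $|N|=15$ \emph{are} realizable as unions of orbits and no shortcut is available; one must solve the intersection-number equations of Lemma~\ref{lem:subgp} modulo $11$ and modulo $13$ and show the resulting solution sets are pairwise incompatible. If you want to salvage your approach you would have to either prove by some other means that $2$ is a multiplier of every putative $CW(143,36)$ (which the standard theorems do not give), or fall back on the multiplier $3$ and carry out the casework.
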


\begin{proof}

Since $k$ is not a prime power, Theorem~\ref{thm:mult} does not
apply.  However, a more general multiplier theorem (\cite{arasuxiang},
Theorem 2.4) shows that 3 is still a multiplier, and so the orbit
information is exactly the same as in Table~\ref{tab:143}.

The table is the same, but the equations are
\begin{eqnarray*}
y_1 + 3(y_2 + y_3 + y_4 + y_5) & = & 6 \\  
y_1^2 + 3(y_2^2 + y_3^2 + y_4^2 + y_5^2) & = & 36 \\  
\end{eqnarray*}
for $\Znum_{13}$, and
\begin{eqnarray*}
x_1 + 5(x_2+x_3) & = & 6 \\  
x_1^2 + 5(x_2^2 + x_3^2) & = & 36 \\  
\end{eqnarray*}
for $\Znum_{11}$.

The solutions to the
$\Znum_{11}$ equations are $(6,0,0)$,
$(-4,2,0)$ and $(-4,0,2)$.  
The solutions to the
$\Znum_{13}$ equations are $(6,0,0,0,0)$ and
$(0,2,2,-2,0)$ and permutations of the last four coordinates.

But none of the $\Znum_{11}$ and
$\Znum_{13}$  are compatible; $(6,0,0)$ would force two or more of the
size-3 orbits in the first row to be in $P$.  The corresponding
columns would then have weight $3 \pmod {15}$, which does not fit with
any of the $\Znum_{13}$ solutions.  Similarly, $(-4,2,0)$ or
$(-4,0,2)$ would force the
$\br{0}_1$ orbit to be in $N$, so that the first column would have
weight $4 \pmod 5$, which is not compatible with
any of the $\Znum_{13}$ solutions.
\end{proof}

\bigskip






\section{Contracted Circulant Weighing Matrices}

For most of the remaining open cases in Strassler's table, we do not
have any multipliers from Theorem~\ref{thm:mult}, either because $k$
is composite or not relatively prime to $n$.


The following theorem,
due to McFarland \cite{mcfarland}, will sometimes allow us to obtain
multipliers in these cases:

\begin{theorem}\label{thm:mcf}
  Let $M$ be an $ICW_d(m,k)$ with $\gcd(m,k)=1$. Let $k$ have prime
  factorization $p_1^{e_1} \cdots p_s^{e_s}$.  If $t$ is an
  integer for which there are $f_i$ for $i=1,2,\ldots,s$ with
  \begin{equation}\label{eq:mcf}
t \equiv p_i^{f_i} \pmod m,
  \end{equation}
then $t$ is a multiplier of $M$.
\end{theorem}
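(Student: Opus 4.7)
The plan is to prove $t$ is a multiplier by showing $M^{(t)} = X^g M$ in $\Znum[\Znum_m]$ for some $g \in \Znum_m$; equivalently, since $MM^{(-1)}=k$ and $\gcd(m,k)=1$, to show that $h := M^{(t)}M^{(-1)}/k \in \bbQ[\Znum_m]$ is a single group monomial $X^g$. Right away $h$ satisfies $h\,h^{(-1)} = 1$ (from $MM^{(-1)}=k$ and commutativity) and $\chi_0(h) = s^2/k = 1$; since the simultaneous character map $\bbQ[\Znum_m] \hookrightarrow \prod_{a} \bbQ(\zeta_m)$ is injective, it suffices to analyze $h$ character by character.

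For a non-trivial character $\chi_a(j) = \zeta_m^{aj}$, set $\alpha = \chi_a(M) \in \Znum[\zeta_m]$, so $\alpha\overline{\alpha}=k$. Writing $\sigma_t \in \mathrm{Gal}(\bbQ(\zeta_m)/\bbQ)$ for the automorphism $\zeta_m \mapsto \zeta_m^t$, a direct computation gives $\chi_a(M^{(t)}) = \sigma_t(\alpha)$, hence $\chi_a(h) = \sigma_t(\alpha)/\alpha$. The engine of the proof is the group-ring Frobenius congruence $\beta^{(p)} \equiv \beta^p \pmod p$ in $\Znum[G]$, valid for any prime $p$; it follows from the mod-$p$ multinomial theorem together with Fermat's little theorem on integer coefficients, and at the level of characters it forces $\sigma_p$ to act as the Frobenius at any prime above $p$ whenever $p$ is unramified. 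The hypothesis $t \equiv p_i^{f_i} \pmod m$ says exactly $\sigma_t = \sigma_{p_i}^{f_i}$, and since $\gcd(p_i,m)=1$ each $p_i$ is unramified in $\Znum[\zeta_m]$; thus $\sigma_t$ lies in the cyclic decomposition group at every prime above each $p_i$, and because the Galois group is abelian $\sigma_t$ fixes every such prime. The identity $\alpha\overline{\alpha}=k$ confines the support of the ideal $(\alpha)$ to primes above the $p_i$, so $v_\mathfrak{p}(\sigma_t(\alpha)) = v_\mathfrak{p}(\alpha)$ at every prime $\mathfrak{p}$ of $\Znum[\zeta_m]$. Hence $\chi_a(h)$ is a unit of $\Znum[\zeta_m]$ of complex absolute value $1$, and Kronecker's theorem identifies it as a root of unity.

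With every $\chi_a(h)$ now an algebraic integer, Fourier inversion gives the rational coefficients of $h$ as $c_j = \frac{1}{m}\sum_a \chi_a(h)\zeta_m^{-aj}$, forcing $m c_j \in \Znum[\zeta_m] \cap \bbQ = \Znum$, i.e., $c_j \in \frac{1}{m}\Znum$. On the other hand $kh = M^{(t)}M^{(-1)} \in \Znum[\Znum_m]$ gives $c_j \in \frac{1}{k}\Znum$; the coprimality $\gcd(m,k)=1$ then forces $c_j \in \Znum$. The autocorrelation identity $h\,h^{(-1)}=1$ yields $\sum_j c_j^2 = 1$ in $\Znum$, so exactly one $c_j$ is $\pm 1$ and the rest vanish; $\chi_0(h) = 1$ selects the positive sign, producing $h = X^g$ and the desired identity $M^{(t)} = X^g M$.

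The main obstacle is the integrality step in the second paragraph: promoting $|\sigma_t(\alpha)/\alpha|=1$ to the stronger assertion that this ratio is an algebraic integer. This is precisely where the full hypothesis is used --- $t$ must match a power of \emph{every} prime factor of $k$ modulo $m$, which is what places $\sigma_t$ in the decomposition group of every relevant prime; failure at a single $p_i$ would leave some prime above $p_i$ not fixed by $\sigma_t$, allowing $\sigma_t(\alpha)/\alpha$ to acquire denominators there and causing the Kronecker step to collapse.
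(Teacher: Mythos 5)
The paper does not prove this theorem at all --- it is quoted as a known result of McFarland \cite{mcfarland} --- so there is no in-paper argument to compare against; your proposal supplies a self-contained proof, and it is correct. It is essentially the classical algebraic-number-theoretic proof of the multiplier theorem, and it goes through cleanly here precisely because the group-ring equation for a weighing matrix is $MM^{(-1)}=k$ with no $\lambda G$ term: the element $h=M^{(t)}M^{(-1)}/k$ really does satisfy $hh^{(-1)}=1$ exactly, so once you know its coefficients are integers the conclusion $h=X^g$ is immediate, with none of the positivity headaches that force extra hypotheses (such as $k>\lambda$) in the difference-set versions of this theorem. Every step checks out: $\chi_a(h)=\sigma_t(\alpha)/\alpha$ with $\alpha\overline{\alpha}=k$; the ideal $(\alpha)$ is supported on primes above the $p_i$; the hypothesis $t\equiv p_i^{f_i}\pmod m$ together with $\gcd(p_i,m)=1$ puts $\sigma_t$ in the decomposition group of every such prime, so $(\sigma_t(\alpha))=(\alpha)$ and the ratio is an algebraic integer; Fourier inversion plus $\gcd(m,k)=1$ forces $h\in\Znum[\Znum_m]$; and $\sum_j c_j^2=1$ with $\chi_0(h)=1$ pins down $h=X^g$, whence $M^{(t)}=X^gM$ after cancelling the invertible element $M^{(-1)}$ in $\bbQ[\Znum_m]$. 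Two cosmetic remarks: the detour through the group-ring congruence $\beta^{(p)}\equiv\beta^p\pmod p$ is unnecessary, since $\sigma_p$ being the Frobenius at every prime above $p$ in $\bbQ(\zeta_m)/\bbQ$ (for $p\nmid m$) is the standard characterization $\sigma_p(\zeta_m)=\zeta_m^p\equiv\zeta_m^p\pmod{\mathfrak p}$; and the Kronecker root-of-unity step is likewise not needed --- integrality of the $\chi_a(h)$ is all the Fourier-inversion argument uses. You correctly identify where the full hypothesis (a congruence for \emph{every} prime divisor of $k$) enters.
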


Thus  for $A$ a putative $CW(n,k)$, we may apply this theorem to $A^{(d)}$
for $d = \gcd(n,k)$.
If such a $t$ exists, we will call it a $d$-multiplier for $A$
if we can find a $t$
satisfying (\ref{eq:mcf}), and we may apply the methods of the previous
section.

\begin{proposition} \label{prop:132-81}
A $CW(132,81)$ does not exist.  
\end{proposition}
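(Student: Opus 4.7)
The plan is to apply the contraction framework of this section to $A^{(3)}$. Since $\gcd(132,81)=3$, Theorem~\ref{thm:mult} does not directly give a multiplier of $A$, but the contracted matrix $A^{(3)}$ is an $ICW_3(44,81)$ with $\gcd(44,81)=1$, so Theorem~\ref{thm:mcf} applies with $t=3$ (as $81=3^4$ has a single prime factor) and yields $3$ as a multiplier of $A^{(3)}$. After an appropriate shift we may assume $A^{(3)}$ is fixed by $\times 3$ on $\Znum_{44}$, so its coefficients are constant on the orbits of $\Znum_{44}$ under $\times 3$.

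Next I would build the orbit table for $\Znum_{44}=\Znum_4\times\Znum_{11}$: the row-orbits in $\Znum_4$ are $\{0\},\{2\},\{1,3\}$, and the column-orbits in $\Znum_{11}$ are $\{0\},\{1,3,9,5,4\},\{2,6,7,10,8\}$, producing nine orbits of $\Znum_{44}$ arranged in a $3\times 3$ grid. Folding $A^{(3)}$ to $\Znum[\Znum_4]$ gives $B$ with $BB^{(-1)}=81$, and since $3\equiv-1\pmod 4$, $3$ is self-conjugate mod $4$; Theorem~\ref{thm:turyn} with $a=2$ then forces $B\equiv 0\pmod 9$. The equations $\sum b_i=9$ and $\sum b_i^2=81$ now admit only the solution with a single $b_i=9$ and the rest zero; since the multiplier $3$ forces $b_1=b_3$, the nonzero entry sits at $i\in\{0,2\}$.

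The key step is to exploit the $ICW_3$ coefficient bound $|\alpha|\leq 3$. Each $\Znum_4$-row-sum equation takes the shape $\alpha+5(\beta+\gamma)\in\{0,9\}$, and the mod-$5$ obstruction combined with $|\alpha|\leq 3$ pins $\alpha$ down uniquely: the row of sum $9$ requires $\alpha=-1$, and each row of sum $0$ requires $\alpha=0$. Applied to the three orbits projecting to $\br{0}_1\in\Znum_{11}$, namely $\{0\}$, $\{22\}$, and $\{11,33\}$, this gives the first-column $\Znum_{11}$ intersection number $c_0=\alpha_{\{0\}}+\alpha_{\{22\}}+2\alpha_{\{11,33\}}=-1$ regardless of whether $b_0=9$ or $b_2=9$. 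But the integer solutions of $x_1+5(x_2+x_3)=9$ and $x_1^2+5(x_2^2+x_3^2)=81$ (already tabulated in the $CW(143,81)$ proof) have $x_1\in\{9,4,-6\}$ only -- the candidate $x_1=-1$ leads to discriminant $28$, which is not a square -- contradicting $c_0=-1$.

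The main obstacle I expect is the wider coefficient range $[-3,3]$ of an $ICW_3$ relative to the $\{0,\pm 1\}$ of a $CW$, which a priori allows extra flexibility in each folding equation. The saving grace is that the row-sum equations, being $\equiv 9\pmod 5$ or $\equiv 0\pmod 5$, leave exactly one admissible $\alpha$ value in $[-3,3]$, so the coefficients in the first $\Znum_{11}$-column are completely rigid and the contradiction comes out the same in both $b_0=9$ and $b_2=9$ sub-cases.
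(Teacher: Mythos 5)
Your proposal is correct and follows essentially the same route as the paper: contract to an $ICW_3(44,81)$, get the multiplier $3$ from Theorem~\ref{thm:mcf}, use self-conjugacy of $3$ mod $4$ to force intersection numbers $(9,0,0,0)$ on $\Znum_4$, pin down the first-column coefficients via the mod-$5$ row-sum constraint to get a $\Znum_{11}$ intersection number of $-1$, and contradict the $\Znum_{11}$ equations. The only (harmless) differences are that you treat the $b_0=9$ and $b_2=9$ subcases explicitly where the paper takes the first row without loss of generality, and you rule out $x_1=-1$ directly via the discriminant rather than by listing all solutions.
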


\begin{proof}

By Theorem~\ref{thm:mcf}, $3$ is a multiplier for an $ICW_3(44,81)$.  
Table~\ref{tab:132} gives the orbit information.  Since this is an
ICW, any of the orbits may occur with a coefficient up to 3 in
absolute value.

Since 3 is self-conjugate modulo 4, by Theorem~\ref{thm:turyn} the row sums must be
$(9,0,0)$. This means that $\br{0}_1$ has a coefficient of $-1$, with
the other orbits in the first row having coefficients $(1,1)$,
$(2,0)$, or $(3,-1)$ in either order.

\begin{table}
\begin{center}
\begin{tabular}{c||c|c|c||c}
 \multicolumn{4}{c}{\ \ \ \ \ $\Znum_{11}$} \\
$\Znum_{4}$    & $\br{ 0 }_1$   & $\br{ 1 }_5$ & $\br{ 2 }_5$  &  \\ \hline \hline
$\br{ 0 }_1$  & $\br{ 0 }_1$ & $\br{ 4 }_5$ & $\br{ 8 }_5$ & $9$ \\ \hline
$\br{ 2 }_1$  & $\br{ 22 }_1$ & $\br{ 14 }_5$ & $\br{ 2 }_5$ & $0$ \\ \hline
$\br{ 1 }_2$  & $\br{ 11 }_2$ & $\br{ 1 }_{10}$ & $\br{ 7 }_{10}$  & $0$ \\ \hline \hline
 & $y_1$  & $5y_2$  & $5y_3$ & \\
\end{tabular}
\caption{Orbit information for $ICW_3(44,81)$}
\label{tab:132}
\end{center}
\end{table}

The solutions to the $\Znum_{11}$ equations are $(9,0,0)$,
$(4,3,-2)$, $(-6,0,3)$, and permutations of the last two columns.  But
since the second and third row sums are zero, and the other orbits
all have order $0 \pmod 5$, the coefficient of the orbits in those
rows in the first column must be zero.  None of the solutions has
first coefficient $-1$, so no $ICW_3(44,81)$ exists, and so no
$CW(132,81)$ exists.

\end{proof}







\section{Strassler's Table and Beyond}\label{sec:strassler}

There has been a large amount of work on entries in Strassler's table
in the past few years.  In particular, Tan \cite{tan} showed nonexistence for 19
cases, and gave an updated version of the table with 34 open cases remaining
($CW(126,64)$ and $CW(198,100)$ were listed as open in Tan's thesis,
although it was already known that they could be
constructed using Theorem~2.2 of \cite{ad99};
this was corrected in the published paper).
The seven cases resolved above leave
27 open cases.

Of the remaining cases, while the above methods do not yield
hand-checkable proofs, when there is a sufficiently large multiplier
group a computer exhaust becomes quite feasible.  
The search begins in the upper
right corner of Table~\ref{tab:generic}, and scans the boxes right to
left, doing each from from the bottom to the top.  For each orbit it
in turn skips it, adds it to $P$, and adds it to $N$, updating the row
and column sums and recursing.
For every possible set of row and column sums $({\bf r},{\bf c})$, we
call
${\tt Exhaust}(u,v,|\mcb_{uv}|,\emptyset,\emptyset,{\bf r},{\bf c})$.

\begin{algorithm}
\DontPrintSemicolon
\SetKwFunction{Fexhaust}{Exhaust}
\SetKw{KwTo}{downto}
\Fexhaust{$i$,$j$,$l$,$P$,$N$,{\bf r},{\bf c}}{\;
\If(\tcp*[f]{finished, test if we succeeded}){$i=j=l=0$}{  
    \If{${\bf r} = {\bf c} = {\bf 0}$ and \{$P,N\}$ forms a $CW(n,k)$}
    {report $\{P,N\}$}
    \Return\;
}
\If(\tcp*[f]{do orbits in this box}){$l>0$} {
  \Fexhaust{$i,j,l-1,P,N,{\bf r},{\bf c}$}\tcp*[f]{recurse without $l$th orbit}\;
  \Fexhaust{$i,j,l-1,P\cup \mcb_{i,j}^l,N,(r_1,\ldots
    r_i-|\mcb_{i,j}^l|,\ldots,r_u),(c_1,\ldots
    c_j-|\mcb_{i,j}^l|,\ldots,c_v)$}\tcp*[f]{$l$th orbit in $P$}\;
  \Fexhaust{$i,j,l-1,P,N\cup \mcb_{i,j}^l,(r_1,\ldots
    r_i+|\mcb_{i,j}^l|,\ldots,r_u),(c_1,\ldots
    c_j+|\mcb_{i,j}^l|,\ldots,c_v)$}\tcp*[f]{$l$th orbit in $N$}\;
  \ElseIf{$i>0$}{
    \Fexhaust{$i-1,j,|\mcb_{i-1,j}|,P,N,{\bf r},{\bf c}$}\tcp*[f]{do next  box in row}\;
    \ElseIf(\tcp*[f]{row done, stop if row sum nonzero}){$r_j=0$}{
      \Fexhaust{$u,j-1,|\mcb_{u,j-1}|,P,N,{\bf r},{\bf c}$}\tcp*[f]{start next row up}\;
    }
  }
}

}

    \caption{Exhaustive search for $CW(n,k)$}\label{alg:exhaust}
\end{algorithm}

We were able to
eliminate $CW(144,49)$, $CW(152,49)$, $CW(160,49)$, $CW(104,81)$, and
$CW(160,81)$.  The longest of these, $CW(144,49)$, had 27 solutions to
equations (\ref{eq:110a}) and (\ref{eq:110b}) modulo 9, and 252 solutions
modulo 16.  The computation took 15 days on a workstation, and
required testing $2.4$ billion putative circulant weighing matrices.

As stated, Algorithm~\ref{alg:exhaust} will find all $CW(n,k)$, or show that none
exist.  To find $ICW_m(n,k)$, the same algorithm works, allowing
up to $m$ copies of each orbit to be added to $P$ or $N$.
Table~\ref{tab:ContractedMults} gives open cases where we
can apply Theorem~\ref{thm:mcf} with a reasonably large $m$.  
Since there
is no $ICW_2(91,81)$, we have:

\begin{proposition} \label{prop:182-64}
A $CW(182,64)$ does not exist.
\end{proposition}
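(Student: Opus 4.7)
The plan is to proceed as in Proposition~\ref{prop:132-81}. Since $\gcd(182,64) = 2$, Theorem~\ref{thm:mult} does not apply directly, but Theorem~\ref{thm:mcf} does. If $A$ were a putative $CW(182,64)$, then $A^{(2)} \in \Znum[\Znum_{91}]$ would satisfy $A^{(2)}(A^{(2)})^{(-1)} = 64$, and each of its coefficients is a sum of two coefficients of $A$, hence lies in $\{-2,\ldots,2\}$. Thus $A^{(2)}$ is an $ICW_2(91,64)$. Because $k = 2^6$ has $2$ as its only prime factor and $\gcd(91,64) = 1$, Theorem~\ref{thm:mcf} makes every power of $2$ modulo $91$ a multiplier of $A^{(2)}$.

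Next I would compute $\mathrm{ord}_{91}(2) = \mathrm{lcm}(\mathrm{ord}_7(2), \mathrm{ord}_{13}(2)) = \mathrm{lcm}(3,12) = 12$ and determine the $\langle 2\rangle$-orbits of $\Znum_{91} = \Znum_7 \times \Znum_{13}$. The $\Znum_7$ orbits are $\{0\}$, $\{1,2,4\}$, $\{3,5,6\}$; since $2$ is a primitive root modulo $13$, the nonzero residues modulo $13$ form a single orbit of size $12$. Applying Lemma~\ref{lem:subgp}, the $\Znum_{13}$ equations $z_0 + 12 z_1 = 8$, $z_0^2 + 12 z_1^2 = 64$ force the unique solution $(z_0,z_1) = (8,0)$, pinning the column profile; the $\Znum_7$ equations $y_0 + 3y_1 + 3y_2 = 8$, $y_0^2 + 3y_1^2 + 3y_2^2 = 64$ admit only a short list of integer triples.

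With these row and column profiles in hand, I would invoke Algorithm~\ref{alg:exhaust}, modified so that each orbit is assigned a coefficient in $\{-2,-1,0,1,2\}$ instead of $\{-1,0,1\}$, and check whether any resulting element of $\Znum[\Znum_{91}]$ satisfies $A^{(2)}(A^{(2)})^{(-1)} = 64$. The exhaustion returns no solutions, so no $ICW_2(91,64)$ exists; consequently no $CW(182,64)$ exists either.

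The main obstacle is combinatorial rather than conceptual. Although the $\Znum_{13}$ constraint pins down the column profile uniquely, the $\Znum_7$ side still admits several row profiles, and the prime $2$ is self-conjugate only modulo $13$ (not modulo $7$ or modulo $91$), so Theorem~\ref{thm:turyn} cannot be used to collapse the picture further. The enlarged coefficient range of an $ICW_2$ substantially inflates the branching of Algorithm~\ref{alg:exhaust} relative to the $CW$ setting, making a fully hand-checked case analysis unwieldy; the clean completion is via the exhaustive search recorded in Table~\ref{tab:ContractedMults}.
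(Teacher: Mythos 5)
Your proposal is correct and follows essentially the same route as the paper: contract modulo $2$ to an $ICW_2(91,64)$, obtain $t=2$ as a multiplier of the contraction via Theorem~\ref{thm:mcf}, and conclude from the exhaustive search recorded in Table~\ref{tab:ContractedMults} (which reports zero such $ICW$s) that no $CW(182,64)$ exists. The only difference is that you spell out the orbit structure and intersection-number equations that the paper leaves implicit; note also that the paper's sentence preceding the proposition contains a typo, writing $ICW_2(91,81)$ where $ICW_2(91,64)$ is meant.
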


\begin{table}
\begin{center}
\begin{tabular}{|c|c|c|c|c|c|}
\hline
$n$ & $k$ & $m$ & $t$ & $|M|$ & \# $ICW_d(m,k)$\\ \hline
$105$ & $36$	&	$35$	&	$4$ & $6$ & $1$\\ 
$112$ & $36$	&	$7$	&	$2$ & $3$ & $2$\\
$117$ & $36$	&	$13$	&	$3$ & $3$ & $3$\\   
$140$ & $36$	&	$35$	&	$4$ & $6$ & $1$\\ 
$195$ & $36$	&	$65$	&	$16$ & $3$ & $4$\\   
$140$ & $64$	&	$35$	&	$2$ & $12$ & $3$\\   
$180$ & $64$	&	$45$	&	$2$ & $12$ & $1$\\   
$182$ & $64$	&	$91$	&	$2$ & $12$ & $0$\\
$196$ & $64$	&	$49$	&	$2$ & $21$ & $3$\\
$132$ & $81$	&	$44$	&	$3$ & $10$ & $0$\\   
$156$ & $81$	&	$52$	&	$3$ & $6$ & $100$\\
$195$ & $81$	&	$65$	&	$3$ & $12$ & $2$\\ 
$198$ & $81$	&	$22$	&	$3$ & $5$ & $13$\\
$156$ & $100$	&	$39$	&	$5$ & $4$ & $6$\\
$165$ & $100$	&	$33$	&	$4$ & $5$ & $8$\\   
$195$ & $100$	&	$39$	&	$5$ & $4$ & $6$\\
\hline
\end{tabular}
  \caption{$ICW$s for Open Cases}\label{tab:ContractedMults}
\end{center}
  \end{table}

For the other cases, there are $ICW$s that could potentially be lifted
to the corresponding CW. It is likely that further computations
could eliminate some of these, similar to how \cite{ArasuNabavi}
showed that there was no lift of an $ICW_2(77,36)$ to a $CW(154,36)$,
or of an $ICW_2(85,64)$ to a $CW(170,64)$.

\begin{table}
\begin{center}
\begin{tabular}{|c|c||c|c||c|c||c|c||c|c|}  \hline
$n$ & $k$  &	$n$ & $k$  &	$n$ & $k$  &	$n$ & $k$  &	$n$ & $k$ \\ \hline\hline
105	&	36	&	116	&	49	&	140	&	64	&	156	&	81	&	112	&	100	\\	
112	&	36	&	120	&	49	&	180	&	64	&	195	&	81	&	120	&	100	\\	
117	&	36	&	192	&	49	&	196	&	64	&	198	&	81	&	155	&	100	\\	
140	&	36	&		&		 &		&		&		&		&156	&	100	\\		
180	&	36	 & 		&		&              &             &	&		 &165	&	100	\\	
195	&	36	&              &             &              &             &              &             &182	&	100	\\	
	&		&              &             &              &             &              &             &195	&	100	\\\hline	
\end{tabular}
\caption{Remaining Open Cases with $n \leq 200$, $k \leq 100$}
\label{tab:strassler}
\end{center}
\end{table}

The remaining cases either have no known multipliers or a very small
multiplier group, so the methods of this paper will not work to
eliminate them.  Hopefully some new ideas will soon allow Strassler's
table to be fully settled, as Lander's table of difference set 
cases were twenty years ago \cite{iiams}.

As with Lander's table for difference sets, the parameters for
Strassler's table, $n \leq 200$ and $s \leq 10$, were a convenient
focus on approachable problems, not a hard limit never to be
exceeded.  
The code written for the above searches can handle larger numbers, so
we have started exploring further.  
The second author has set up an online database \cite{ljcr},
which contains a current version of the table, along with
known circulant weighing matrices 
for parameters in Strassler's table. 
It also has partial results for
for $n \leq 1000$ and $k \leq 19^2$. 
Out of the 15982 such parameters, 1175 have $CW$s, 12017 do not, and 2790 remain
open.

\section{Proper $CW(n,k)$}

Recall that a $CW(n,k)$ is called proper if it is not a multiple of
any smaller $CW$, i.e. its group ring representation $A(X)$ is not
equal to $B(X^d)$ for any $n=dm$ for $B(X)$ a $CW(m,k)$.
For example, 
$$A=X+X^2+X^3+X^6+X^9+X^{18}-X^4-X^{12}-X^{10}$$
is a proper $CW(26,3^2)$ (i.e. no $X^a A(X^b)$ for $b$ relatively
prime to $26$ has all its coefficients with a common factor),
while
$$A=X^2+X^8+X^{10}+X^{12}+X^{14}+X^{20}-1-X^{4}-X^{16}$$
is a multiple of the proper $CW(13,3^2)$
$$A=X+X^4+X^{5}+X^{6}+X^{7}+X^{10}-1-X^{2}-X^{8}.$$

Clearly it suffices to study proper $CW$s, and restricting our
attention to those lets us present the state of knowledge about
circulant weighing matrices in a form far more compact than
Strassler's table.  In this section we give the known results, which
almost entirely come from two constructions.

Leung and Schmidt \cite{ls2011} showed that when $k$ is an odd prime
power there are only a finite
number of proper $CW(n,k)$.
For which $k$ can we give a complete list of
proper $CW(n,k)$?  This has been solved for $k=4$ \cite{eh} and $k=9$
\cite{aalms08}.
For $k=25$ Leung and Ma
\cite{lm1} show that none exist with $n \equiv 0 \pmod 5$, and in a
2011 preprint  \cite{lm2} deal with the other cases, although this has
not appeared in print.

For $k=16$ this question was not completely answered. In
\cite{arasu_etal_2006}, it is shown that all proper $CW(n,16)$ have
either $n=21, 31,63$ or are of ``Type II'', meaning that they are
constructed using Theorem 2.3 of that paper:

\begin{theorem}\label{thm:2.3}
  If $B$ is a $CW(2n,k)$, and $C$ is a $CW(n,k)$.  If the supports of
  $B(X)$, $X^n B(X)$,$C(X^2)$, $X^n C(X^2)$ are pairwise disjoint,
  then 
$$(1-X^n) B(X) + (1+X^n) C(X^2)$$
is a $CW(2n,4k)$.
\end{theorem}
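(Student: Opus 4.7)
The plan is to set
$$
A(X) = (1-X^n)B(X) + (1+X^n)C(X^2),
$$
viewed as an element of $\Znum[X]/(X^{2n}-1)$, and verify the two defining conditions for a $CW(2n,4k)$: that every coefficient of $A$ lies in $\{0,\pm 1\}$, and that $A\,A^{(-1)}=4k$.

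The coefficient condition is immediate. Rewrite
$$
A(X) = B(X) - X^n B(X) + C(X^2) + X^n C(X^2).
$$
Each of the four polynomials on the right has coefficients in $\{0,\pm 1\}$, and the pairwise disjointness of their supports, which is precisely the hypothesis, ensures that at every exponent of $\Znum_{2n}$ at most one of the four is nonzero. Consequently no cancellation or reinforcement occurs, and all coefficients of $A(X)$ lie in $\{0,\pm 1\}$.

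For the norm identity, I would use $X^{-n}\equiv X^n \pmod{X^{2n}-1}$ to obtain
$$
A^{(-1)}(X) = (1-X^n)B^{(-1)}(X) + (1+X^n)C^{(-1)}(X^2).
$$
Expanding $A\,A^{(-1)}$ produces four terms; the two cross terms both carry the factor $(1-X^n)(1+X^n)=1-X^{2n}=0$ and drop out. The surviving diagonal terms have coefficients $(1-X^n)^2=2-2X^n$ and $(1+X^n)^2=2+2X^n$. Now $B\,B^{(-1)}=k$ because $B$ is a $CW(2n,k)$, while $C\,C^{(-1)}\equiv k \pmod{X^n-1}$ lifts under the substitution $X\mapsto X^2$ to $C(X^2)\,C^{(-1)}(X^2)\equiv k \pmod{X^{2n}-1}$, since $X^n-1$ pulls back to a divisor of $X^{2n}-1$. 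Assembling the pieces,
$$
A\,A^{(-1)} = (2-2X^n)k + (2+2X^n)k = 4k,
$$
which is the required identity.

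The whole argument is a clean algebraic identity in $\Znum[\Znum_{2n}]$ rather than a combinatorial obstruction. There is no real obstacle: the disjoint-supports hypothesis is exactly what is needed to preserve the $\{0,\pm 1\}$-coefficient property after the multiplication by $1\pm X^n$, and the annihilation of the cross terms via $(1-X^n)(1+X^n)=0$ is what makes the norm condition come out cleanly.
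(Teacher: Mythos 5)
Your proof is correct. The paper itself states this theorem without proof, citing it from \cite{arasu_etal_2006}, and your argument is exactly the standard verification one would give: the disjoint-supports hypothesis yields the $\{0,\pm1\}$ coefficients, the cross terms die because $(1-X^n)(1+X^n)\equiv 0 \pmod{X^{2n}-1}$, and $(1\mp X^n)^2 = 2\mp 2X^n$ combines the two weight-$k$ norms into $4k$. The only cosmetic quibble is the phrase ``$X^n-1$ pulls back to a divisor of $X^{2n}-1$''; what you mean (and use) is that the ring homomorphism $X\mapsto X^2$ sends $X^n-1$ to $X^{2n}-1$, so the identity $C\,C^{(-1)}=k$ in $\Znum[X]/(X^n-1)$ maps to $C(X^2)\,C^{(-1)}(X^2)=k$ in $\Znum[X]/(X^{2n}-1)$, which is what you state.
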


With this we can classify the proper $CW(n,16)$ of
even order:

\begin{theorem}
  The  proper $CW(n,16)$ have order 21,31,63, and $14m$ for all $m \geq 2$.
\end{theorem}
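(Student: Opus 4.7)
The plan is to combine the dichotomy of \cite{arasu_etal_2006}, which already handles the odd orders $n\in\{21,31,63\}$, with the classification of proper $CW(m,4)$ in \cite{eh} to reduce the even-order problem to a counting exercise in $\Znum_{14m}$.

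By \cite{arasu_etal_2006}, every proper $CW(n,16)$ with $n\notin\{21,31,63\}$ is of Type~II: $n$ is even, and the matrix has the form $F(X)=(1-X^{n/2})B(X)+(1+X^{n/2})C(X^2)$ for some $CW(n,4)$ $B$ and $CW(n/2,4)$ $C$, whose four supports in Theorem~\ref{thm:2.3} are pairwise disjoint in $\Znum_n$. By \cite{eh}, the unique proper $CW(m,4)$ is $CW(7,4)$, so $CW(m,4)$ exists iff $7\mid m$; in particular Type~II forces $n=14m$ for some $m\geq 1$, and up to cyclic shifts $C(X)=A(X^m)$ and $B(X)=X^s A'(X^{2m})$ for $CW(7,4)$s $A,A'$ and some $s\in\Znum_{14m}$.

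Next I set up the subgroup framework. Let $H=\langle 2m\rangle$ denote the order-$7$ subgroup of $\Znum_{14m}$. The four supports from Theorem~\ref{thm:2.3} lie respectively inside the cosets $H$, $7m+H$, $s+H$, and $s+7m+H$, so the disjointness condition is equivalent to requiring these four cosets be distinct. A short check identifies this with the single inequality $s\notin\langle m\rangle$, where $\langle m\rangle=H\cup(7m+H)$ has order $14$. Hence an admissible $s$ exists iff $14m>14$, i.e., iff $m\geq 2$; for $m=1$ no admissible $s$ exists, no Type~II $CW(14,16)$ exists, and the dichotomy rules out a proper $CW(14,16)$ altogether.

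For each $m\geq 2$ I take $s=1$ (admissible since $m\nmid 1$) and $CW(7,4)$s $A,A'$ with $0\in\supp(A')$, and apply Theorem~\ref{thm:2.3} to obtain a $CW(14m,16)$. The most delicate step is verifying properness of this $F$; rather than case-splitting over all divisors of $14m$, the choice $s=1$ short-circuits the argument, since $1\in\supp(B)\subseteq\supp(F)$ while $1\notin\langle d\rangle$ for any $d>1$, so $F(X)=G(X^d)$ immediately forces $d=1$. The main obstacle is thus really the packaging: translating Type~II disjointness into the coset inequality $s\notin\langle m\rangle$ and then choosing $s=1$ so that existence for $m\geq 2$, nonexistence for $m=1$, and properness all drop out of the same parametrization.
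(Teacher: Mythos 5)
There is a genuine gap at your very first reduction. You cite \cite{eh} as saying that the unique proper $CW(m,4)$ is the $CW(7,4)$, so that a $CW(m,4)$ exists iff $7\mid m$; this is false. The classification of weight-$4$ circulant weighing matrices also contains the proper $CW(2m,4)$, equivalent to $-1+X+X^{m}+X^{m+1}$, for every $m\geq 2$ (see the $k=2^2$ row of Table~\ref{tab:proper}). Consequently your assertion that ``Type~II forces $n=14m$,'' with both ingredients of Theorem~\ref{thm:2.3} being shifted multiples of $CW(7,4)$, is unproved: a priori a Type~II $CW(2N,16)$ could be assembled from even-order weight-$4$ matrices with $7\nmid N$, which would produce proper $CW(n,16)$ at orders your theorem excludes. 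Ruling this out is precisely the substance of the converse direction, and it is where the paper spends its final paragraph: any weight-$4$ ingredient from the even family (proper, or improper but not a multiple of $CW(7,4)$) has support of the form $\{a,a+d,a+N,a+N+d\}$, which is invariant under translation by $N$; hence the supports of $B(X)$ and $X^{N}B(X)$ (or of $C(X^2)$ and $X^{N}C(X^2)$) coincide and the disjointness hypothesis of Theorem~\ref{thm:2.3} fails. Only after both ingredients are forced to be multiples of $CW(7,4)$ does your coset analysis in $\Znum_{14m}$ become legitimate; that analysis itself (four $4$-element subsets of $7$-element cosets are pairwise disjoint iff the cosets $H$, $7m+H$, $s+H$, $s+7m+H$ are distinct, iff $s\notin\br{m}$) is correct and is a clean repackaging of the paper's disjointness check, including the exclusion of $m=1$.

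A secondary, fixable issue: properness requires that no \emph{equivalent} matrix $X^{a}F(X^{t})$ be of the form $G(X^{d})$ with $d>1$, not merely $F$ itself; checking $1\in\supp(F)$ and $1\notin\br{d}$ handles only the latter. The repair is to note that $\supp(F)$ contains two elements differing by a unit (e.g.\ $0$ and $1$, which requires you to place $0$ in the support of $A$ as well as of $A'$), so the support of every translate of every automorphic image generates $\Znum_{14m}$ and cannot lie in $\br{d}$; this is the role played in the paper's proof by the observation that $X^{0}$, $X^{1}$, $X^{2m}$, $X^{7m}$ all have nonzero coefficients.
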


\begin{proof}
The odd orders were taken care of in \cite{arasu_etal_2006}.
Let $C=-1+X+X^2+X^4$ denote the $CW(7,4)$, and 
$$
A = (1-X^{7m})C(X^{2m})  + (1+X^{7m}) X C(X^m).
$$
The coefficients of $A$ are disjoint for $m>1$, so by
Theorem~\ref{thm:2.3} $A$ is a
$CW(14m,16)$.  Since the coefficients of $X^0$, $X^1$, $X^{2m}$ and
$X^{7m}$ are nonzero, no equivalent difference set has all terms
divisible by $2$, $7$ or $m$, so it is a proper one.

The only other way to construct a Type II CW would be to use one of
the $CW(2m,4)$. 
Proper ones are equivalent to $-1+X+X^{m} + X^{m+1}$
and so in Theorem~\ref{thm:2.3} the supports would not be disjoint.
Improper ones are either a multiple of $CW(7,4)$, or also have nonzero
coefficients for $X^0$ and $X^m$, and so also fail the requirements of
the theorem.
\end{proof}

For larger $k$ not much is known.
Table~\ref{tab:proper} gives a list of known proper $CW(n,k)$ for 
$k \leq 19^2$.  Aside from small cases, they all come from
Theorems~\ref{thm:kronecker} and \ref{thm:rds} below.

The Kronecker product  construction of Arasu and Seberry \cite{arasuseberry}
accounts for almost all of the proper $CW(n,s^2)$ for $s$
not prime, and all the infinite classes except for $CW(2m,2^2)$
\cite{eh} and $CW(48m,6^2)$ \cite{ss13}:

\begin{theorem}\label{thm:kronecker}
  If a proper $CW(n_1,k_1)$ and proper $CW(n_2,k_2)$ exist with $\gcd(n_1,n_2)=1$,
  then they may be used to construct a proper $CW(n_1 n_2,k_1 k_2)$
\end{theorem}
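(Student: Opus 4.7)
The approach is the standard Kronecker product construction in the group ring. Given proper polynomials $A_1(X)\in\Znum[X]/(X^{n_1}-1)$ for $CW(n_1,k_1)$ and $A_2(X)\in\Znum[X]/(X^{n_2}-1)$ for $CW(n_2,k_2)$, the plan is to define
$$
A(X) := A_1(X^{n_2})\,A_2(X^{n_1}) \in \Znum[X]/(X^{n_1n_2}-1)
$$
and verify in turn that (a) $A$ has all coefficients in $\{0,\pm 1\}$, (b) $A\,A^{(-1)}=k_1k_2$, and (c) $A$ is proper. Parts (a) and (b) use coprimality only in the form of the Chinese Remainder isomorphism $\Znum_{n_1n_2}\cong\Znum_{n_1}\times\Znum_{n_2}$; part (c) is where the hypotheses that $A_1$ and $A_2$ are proper actually enter, and is the main obstacle.

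For (a), coprimality makes $(i,j)\mapsto in_2+jn_1\pmod{n_1n_2}$ a bijection from $\Znum_{n_1}\times\Znum_{n_2}$ onto $\Znum_{n_1n_2}$, so in the expansion $A_1(X^{n_2})A_2(X^{n_1})=\sum_{i,j}a_ib_j\,X^{in_2+jn_1}$ all exponents are pairwise distinct and each coefficient of $A$ is a single product $a_ib_j\in\{0,\pm 1\}$. For (b), the maps $X\mapsto X^{n_2}$ and $X\mapsto X^{n_1}$ define commuting ring embeddings into $\Znum[X]/(X^{n_1n_2}-1)$, so
$$
A(X)\,A(X^{-1}) = \bigl(A_1(X^{n_2})A_1(X^{-n_2})\bigr)\bigl(A_2(X^{n_1})A_2(X^{-n_1})\bigr) = k_1k_2.
$$

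For (c), the main obstacle, I plan to argue by contradiction. If $A$ were a multiple of some smaller $CW$, then after applying the available equivalence $X\mapsto X^b$ with $\gcd(b,n_1n_2)=1$ and a cyclic shift, $\supp(A)$ would have to lie in a coset $a+d\,\Znum_{n_1n_2}$ for some divisor $d>1$ of $n_1n_2$; the substitution $X\mapsto X^b$ is by a unit and setwise fixes every sublattice $d\,\Znum_{n_1n_2}$, so it does not enlarge this condition. Factor $d=d_1d_2$ with $d_i\mid n_i$ (possible by coprimality). Under the CRT isomorphism, the sublattice $d\,\Znum_{n_1n_2}$ corresponds to $d_1\Znum_{n_1}\times d_2\Znum_{n_2}$, and $\supp(A)$ corresponds to the product set $n_2\,\supp(A_1)\times n_1\,\supp(A_2)$. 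A product set sits in a product coset only if each factor sits in a coset of the corresponding subgroup, and since $n_2$ is a unit modulo $n_1$ (and $n_1$ modulo $n_2$), this would force $\supp(A_i)$ into a coset of $d_i\,\Znum_{n_i}$ for each $i$. Properness of $A_1$ then forces $d_1=1$ and properness of $A_2$ forces $d_2=1$, contradicting $d>1$. Hence $A$ is proper, and the construction yields the required proper $CW(n_1n_2,k_1k_2)$.
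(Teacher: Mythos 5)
Your proof is correct. Note that the paper itself gives no proof of this theorem --- it is quoted from Arasu and Seberry \cite{arasuseberry} --- so there is no internal argument to compare against; what you have written is a sound, self-contained version of the standard Kronecker (tensor) product construction. The three steps all check out: the map $(i,j)\mapsto in_2+jn_1$ is indeed a bijection onto $\Znum_{n_1n_2}$ by CRT, so the coefficients of $A$ are single products $a_ib_j\in\{0,\pm1\}$; the substitutions $X\mapsto X^{n_1}$ and $X\mapsto X^{n_2}$ are ring homomorphisms, giving $AA^{(-1)}=k_1k_2$; and your reformulation of properness as ``$\supp$ is not contained in any coset of $d\,\Znum_n$ for $d>1$'' matches the paper's definition (which allows shifts and unit substitutions before extracting a common factor $d$ from the exponents), so the product-coset argument correctly reduces $d=d_1d_2>1$ to a violation of properness of $A_1$ or $A_2$. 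The only cosmetic remark is that your identification sends $A_1$ to $A_1^{(n_2)}$ and $A_2$ to $A_2^{(n_1)}$ rather than to $A_1$ and $A_2$ themselves, but these are equivalent circulant weighing matrices, so nothing is lost.
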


For $k$ a prime power, most $CW(n,k)$ come from relative difference sets.
A {\em $(m,n,k,\lambda)$ cyclic relative difference set (RDS)} $D$ is a
$k$-element subset of
$\Znum_{mn}$ such that
$$
D D^{-1} = k + \lambda (\Znum_{mn} - \Znum_n).
$$
See \cite{pott} for more information on relative difference sets.
It is well known (e.g. Theorem 2.1 of \cite{arasu_etal_2006}):

\begin{theorem}
  If a cyclic $(m,2n,k,\lambda$)-RDS exists, then there is a $CW(mn,k)$.
\end{theorem}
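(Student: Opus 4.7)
The plan is to construct the $CW(mn,k)$ as a signed folding of the RDS $D \subset G = \Znum_{2mn}$ modulo the unique order-$2$ subgroup $H = \br{mn}$ of $G$, which lies inside the forbidden subgroup $N = \br m$ of order $2n$.

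First I would observe that each coset of $H$ meets $D$ in at most one element. This follows directly from $DD^{(-1)} = k + \lambda(G - N)$ together with $mn \in N \setminus \{0\}$: the coefficient of $[mn]$ in $DD^{(-1)}$ is zero, so no two distinct elements of $D$ differ by $mn$. Consequently the signed folding
$$
C \;:=\; \sum_{d \in D}(-1)^{d}\,X^{\,d \bmod mn} \;\in\; \Znum[\Znum_{mn}]
$$
has coefficients in $\{-1,0,1\}$, i.e.\ is of the correct form to be a circulant weighing matrix.

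To verify $CC^{(-1)} = k$ I would work character-by-character. Given a character $\psi$ of $\Znum_{mn}$, lift it to the character $\tilde\psi$ of $G$ trivial on $H$ and put $\chi = \chi_{1}\tilde\psi$, where $\chi_{1}(g) = (-1)^{g}$ denotes the sign character of $G$. In the generic regime (where $mn$ is odd) one has $\chi_{1}(mn) = -1$, so $\chi$ is nontrivial on $H \subseteq N$ and hence on $N$; evaluating the RDS identity $DD^{(-1)} = k + \lambda(G - N)$ at $\chi$ then gives $|\chi(D)|^{2} = k$. A direct unwinding of definitions, using that $\tilde\psi$ is trivial on $H$ so that $\tilde\psi(d) = \psi(d \bmod mn)$, yields
$$
\chi(D) \;=\; \sum_{d \in D}(-1)^{d}\,\tilde\psi(d) \;=\; \psi(C),
$$
so $|\psi(C)|^{2} = k$ for every character $\psi$ of $\Znum_{mn}$. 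Fourier inversion then gives $CC^{(-1)} = k$ in $\Znum[\Znum_{mn}]$, which together with the coefficient constraint makes $C$ a $CW(mn,k)$.

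The main technical hinge is the identification $\chi(D) = \psi(C)$: it is the sign character $\chi_{1}$ that converts the $G$-level RDS identity into a $\Znum_{mn}$-level weighing identity, and its nontriviality on $H$ is what distinguishes the ``circulant'' output of the construction from the ``negacyclic'' weighing element in $\Znum[X]/(X^{mn}+1)$ one would otherwise obtain.
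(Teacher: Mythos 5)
The paper itself offers no proof of this statement---it is quoted as well known from \cite{arasu_etal_2006}---so your argument has to stand on its own. Your signed folding by the order-$2$ subgroup $H=\br{mn}$, the observation that $H$-cosets meet $D$ at most once, and the identification $\psi(C)=\chi(D)$ with $\chi=\chi_1\tilde\psi$ constitute the standard argument, and they are complete and correct \emph{when $mn$ is odd}. The difficulty is the clause you tucked into a parenthesis: ``in the generic regime (where $mn$ is odd).'' The theorem carries no such hypothesis, and the paper leans on the even case: Theorem~\ref{thm:rds} and Table~\ref{tab:proper} obtain, for instance, $CW(62,25)$ from a $(31,4,25,5)$-RDS and $CW(614,17^2)$ from a $(307,4,289,\cdot)$-RDS, both with $mn$ even. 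For even $mn$ your argument does not merely need more care---it breaks: $\chi_1(mn)=(-1)^{mn}=1$, so $\chi=\chi_1\tilde\psi$ is trivial on $H$, and for those $\psi$ whose lift is trivial on all of $N$ one gets $|\psi(C)|^2=k-2n\lambda$ or $k^2$ rather than $k$. Equivalently, a direct computation gives
$$
CC^{(-1)}\;=\;k\;+\;\lambda\!\!\sum_{e\in G\setminus N}\!\!(-1)^{e}\,Y^{\,e\bmod mn},
$$
and the sum vanishes exactly when $mn$ is odd; otherwise $C$ is a negacyclic weighing matrix, not a circulant one---precisely the distinction you flag in your final paragraph without resolving it.

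Moreover, the missing case is not a routine verification: the statement is false as written without some additional hypothesis. The set $\{1,2,4\}\subset\Znum_8$ is a cyclic $(4,2,3,1)$-RDS (its differences are exactly $\Znum_8\setminus\{0,4\}$, each occurring once), and the theorem would then yield a $CW(4,3)$, which cannot exist since $3$ is not a perfect square. So either the source carries hypotheses not reproduced in the paper (such as $mn$ odd), or a genuinely different construction is required when $mn$ is even; in either event your proof establishes only the odd case, and the restriction should be stated as an assumption rather than dismissed as ``generic.''
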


In \cite{crds2001} it is shown:
\begin{theorem}
For $q$ a prime power,
a cyclic   $\left( \frac{q^d-1}{q-1},n,q^{d-1},q^{d-2}(q-1)/n
\right)$-RDS exists if and only if $n$ is a divisor of $q-1$ when $q$
is odd or $d$ is even, and if and only if $n$ is a divisor of $2(q-1)$
if $q$ is even and $d$ is odd.
\end{theorem}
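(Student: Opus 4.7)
The plan is to prove the two directions separately: existence by an explicit construction inside the multiplicative group of $\mathbb{F}_{q^d}$, and nonexistence by character-theoretic and multiplier arguments in the spirit of Theorems~\ref{thm:turyn} and \ref{thm:mcf}.

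For existence in the generic regime ($n \mid q-1$, which covers all cases where $q$ is odd or $d$ is even), I would realize the ambient cyclic group of order $mn = n(q^d-1)/(q-1)$ as $\mathbb{F}_{q^d}^* / H$, where $H$ is the unique subgroup of $\mathbb{F}_q^*$ of index $n$ (which exists precisely because $\mathbb{F}_q^*$ is cyclic of order $q-1$ with $n \mid q-1$). The forbidden subgroup of order $n$ is then $\mathbb{F}_q^*/H$. Take $D$ to be the image in this quotient of $\{x \in \mathbb{F}_{q^d}^* : \Tr(x) = 1\}$, where $\Tr$ denotes the trace to $\mathbb{F}_q$. One checks $|D| = q^{d-1}$ because $H$-scaling multiplies the trace by an element of $H \subseteq \mathbb{F}_q$, preserving the value $1$ only when the scalar is $1$. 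The difference count reduces, for each non-forbidden coset $zH$, to counting $y \in \mathbb{F}_{q^d}$ with $\Tr(y) = \Tr(zy) = 1$, which by $\mathbb{F}_q$-linearity of the trace has exactly $q^{d-2}$ solutions in $y$; collapsing under the $H$-action yields the target count $q^{d-2}(q-1)/n$.

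For the exceptional regime $q$ even, $d$ odd, $n \mid 2(q-1)$, the multiplicative construction above cannot produce the extra factor of $2$ directly, since in characteristic $2$ every $q^i - 1$ is odd and hence so is the order of $\mathbb{F}_{q^d}^*/H$. The plan here is a twisted construction that exploits the fact that $q+1 \mid q^d+1$ when $d$ is odd, together with an involutory automorphism coming from a suitable Galois element $x \mapsto x^{q^{(d+1)/2}}$, to build the needed RDS in the cyclic group of order $2n(q^d-1)/(q-1)$.

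The hardest part will be the nonexistence direction. The approach is to assume an RDS $D$ exists and to analyse its character values: for a character $\chi$ of $G$ that is trivial on the forbidden subgroup $N$, $|\chi(D)|^2 = q^{d-2}$, while for $\chi$ nontrivial on $N$, $|\chi(D)|^2 = q^{d-1}$. Since $k = q^{d-1}$ is a prime power, a McFarland-type theorem (Theorem~\ref{thm:mcf}) supplies multipliers of $D$, and the self-conjugacy of the characteristic prime $p$ modulo suitable divisors of $mn$ (Theorem~\ref{thm:turyn}) forces $\chi(D)$ to be divisible by large powers of $p$ in the cyclotomic integers $\mathbb{Z}[\zeta_{mn}]$. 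Combining these divisibility constraints with the norm identities $|\chi(D)|^2 \in \{q^{d-2}, q^{d-1}\}$ eventually yields contradictions unless $n \mid q-1$ (or $n \mid 2(q-1)$ in the $q$ even, $d$ odd regime). The delicate part of this analysis is the case distinction on the $2$-part of $n$, where the parities of $d$ and $q$ interact nontrivially and single out the one exceptional regime in the theorem's statement.
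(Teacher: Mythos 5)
First, a point of context: the paper does not prove this theorem at all --- it is quoted verbatim from \cite{crds2001} (Arasu--Dillon--Leung--Ma) as an imported result, so there is no in-paper proof to compare against. Judged on its own terms, your proposal is sound only in its easiest third. The trace construction for $n \mid q-1$ is correct and is the classical one: with $H \leq \mathbb{F}_q^*$ of index $n$, the map $x \mapsto xH$ is injective on $\{x : \mathrm{Tr}(x)=1\}$, and for $z \notin \mathbb{F}_q$ the count $\#\{y : \mathrm{Tr}(y)=1,\ \mathrm{Tr}(zy) \in H\} = q^{d-2}\,|H|$ follows from linear independence of the two trace functionals. That part would survive being written out in full.

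The other two thirds have genuine gaps. For the exceptional existence case ($q$ even, $d$ odd, $n = 2n'$ with $n' \mid q-1$) you correctly observe that $\mathbb{F}_{q^d}^*$ has odd order so no multiplicative quotient can work, but the ``twisted construction'' you then invoke is not a construction: the map $x \mapsto x^{q^{(d+1)/2}}$ is not an involution on $\mathbb{F}_{q^d}^*$ (its square is $x \mapsto x^{q^{d+1}} = x^q$ modulo $q^d-1$), and nothing in the sketch explains how to produce a $q^{d-1}$-subset of $\mathbb{Z}_2 \times \mathbb{Z}_{mn'}$ with the required autocorrelation; the actual construction in the literature rests on a nontrivial splitting of the Singer difference set that exists precisely when $q$ is even and $d$ is odd, and this is exactly the content you would need to supply. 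For the nonexistence direction, your character computations ($|\chi(D)|^2 = q^{d-2}$ or $q^{d-1}$) are correct but are only the starting point; the plan ``self-conjugacy plus Theorem~\ref{thm:mcf} forces divisibility, hence contradiction'' does not go through as stated, because (i) self-conjugacy of $p$ modulo $mn$ fails for most of the parameter sets you must exclude, so Theorem~\ref{thm:turyn} gives nothing there, and (ii) Theorem~\ref{thm:mcf} requires $\gcd(m,k)=1$, which can fail when $p \mid n$ --- a case your parameters allow and which must be excluded by a separate argument. The published proof of the ``only if'' half is a substantial piece of work involving vanishing sums of roots of unity and exponent bounds, not a routine application of the two theorems cited in this paper, so as written the hardest part of the statement remains unproved.
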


Taking $d=3$, we have:

\begin{theorem}\label{thm:rds}
Let $q$ be a prime power.  Then a proper $CW((q^3-1)/n,q^2)$ exists for all divisors $n$ of $(q-1)$
if $q$ is even, and all divisors $n>1$ of $(q-1)$ if $q$ is odd.
\end{theorem}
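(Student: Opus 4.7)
\medskip
The plan is to chain the two preceding theorems: feed the RDS existence theorem with $d=3$ to produce a cyclic $(q^2+q+1,\, N,\, q^2,\, q(q-1)/N)$-RDS for a suitable subgroup size $N$, and then apply the RDS-to-$CW$ conversion to obtain the desired circulant weighing matrix. Concretely, to produce $CW((q^3-1)/n,\, q^2)$ we take $N = 2(q-1)/n$; writing $2n' = N$ with $n' = (q-1)/n$, the conversion theorem yields a $CW(mn',\, q^2)$ of order $mn' = (q^2+q+1)(q-1)/n = (q^3-1)/n$, as required.

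The first routine check is that this $N$ satisfies the divisibility hypothesis of the RDS existence theorem. When $q$ is even, $N = 2(q-1)/n$ divides $2(q-1)$ for every $n \mid q-1$, so the RDS exists (since $d=3$ is odd). When $q$ is odd, the existence theorem requires $N \mid q-1$, which fails for $n=1$ (because $2(q-1) \nmid q-1$) and so accounts for the restriction $n>1$ that appears in the statement.

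The second and harder task is to verify that the resulting matrix is \emph{proper}: its group-ring representative $A(X) \in \Znum[X]/(X^{(q^3-1)/n}-1)$ must not be of the form $B(X^e)$, up to the equivalence $X^a A(X^b)$, for any $e > 1$ dividing $(q^3-1)/n$. The key input is the explicit description of the RDS as a Singer-type object arising from the action of the Singer cycle on $\mathrm{PG}(2,q)$: its support meets every coset of every proper cyclic subgroup of the ambient group $\Znum_{(q^2+q+1)N}$. Transferring this property through the conversion (which is a group-ring homomorphism composed with a signed coset decomposition) shows that no equivalent $X^a A(X^b)$ has support confined to a proper cyclic subgroup, and so $A$ is proper.

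The main obstacle, I expect, is this properness verification. Producing the $CW$ from the RDS is mechanical once the parameters are matched, but ruling out collapse under every nontrivial cyclic quotient of $\Znum_{(q^3-1)/n}$ requires a careful argument about how the Singer cycle on $\mathbb{F}_{q^3}^*$ acts, and in particular why its orbit structure is preserved by the signed coset decomposition that underlies the RDS-to-$CW$ conversion.
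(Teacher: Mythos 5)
Your overall route---specialize the RDS existence theorem to $d=3$ and then apply the RDS-to-$CW$ conversion---is exactly the paper's route; the paper's entire ``proof'' is the phrase ``Taking $d=3$, we have,'' so you have not missed any hidden argument. The problem is in your ``routine check'' of the divisibility hypothesis when $q$ is odd. You need a cyclic $\bigl(q^2+q+1,\,N,\,q^2,\,q(q-1)/N\bigr)$-RDS with $N=2(q-1)/n$, and for odd $q$ the existence theorem requires $N\mid q-1$. But $2(q-1)/n$ divides $q-1$ if and only if $n/2$ is an integer, so the hypothesis fails for \emph{every odd} divisor $n$ of $q-1$, not only for $n=1$. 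The smallest instance is $q=7$, $n=3$: you would need a $(57,4,49,\lambda)$-RDS, where $4\nmid 6$ and $\lambda=42/4$ is not even an integer; the same happens for $q=11$, $n=5$ and $q=13$, $n=3$. Hence chaining the two quoted theorems proves the statement for $q$ even, and for \emph{even} divisors $n$ of $q-1$ when $q$ is odd, but it does not produce $CW((q^3-1)/n,q^2)$ for odd $n>1$; your claim that the restriction $n>1$ ``accounts for'' all failures is false. This gap is present in the paper itself (its table of proper $CW$s lists $732=(13^3-1)/3$ in bold for $k=13^2$ yet omits $266=(11^3-1)/5$ for $k=11^2$, precisely the inconsistency this parity issue produces), so you have in fact put your finger on a real defect---but your write-up papers over it by asserting the check succeeds. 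Rescuing odd $n>1$ would require a genuinely different construction, not a re-parametrization of the same RDS family.

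On properness, you are right that neither quoted theorem asserts the resulting $CW$ is proper and that the paper is silent on the point. Your sketch is the right kind of statement to aim for but is an assertion rather than a proof. A cleaner start: since $RR^{(-1)}=k+\lambda(G-U)$, no two elements of the RDS $R$ differ by a nonidentity element of the forbidden subgroup $U$, so $R$ meets each coset of $U$ (hence of the involution it contains) at most once; therefore the signed projection has exactly $q^2$ nonzero coefficients, all $\pm1$, and properness reduces to showing this $q^2$-element support is not contained in a coset of a proper subgroup of $\Znum_{(q^3-1)/n}$. That last step still needs an argument tied to the Singer model, so it should be flagged as incomplete rather than dispatched by ``transferring through the conversion.''
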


All the proper $CW(n,k)$ in Table~\ref{tab:proper} coming from this
theorem are in bold.  
This theorem shows that there are 
proper $CW(q^3-1,q^2)$ when $q$ is an even prime power.  They also exist
for $q=3$ and $5$, 
and it is tempting to
conjecture that this is true for all prime powers, but larger cases
are currently out of reach.


\begin{table}
\begin{center}
\begin{tabular}{|c|l|}  \hline
$k$ & Known Proper $CW(n,k)$  \\ \hline
$2^2$	&	\underline{$2m$}, {\bf 7}	\\
$3^2$	&	{\bf 13}, \underline{24}, \underline{26}	\\
$4^2$	&	$14m$, {\bf 21}, \underline{31}, {\bf 63}	\\
$5^2$	&	{\bf 31}, \underline{33}, {\bf 62}, \underline{71}, \underline{124}, \underline{142}	\\
$6^2$	&	$26m$, \underline{$48m$}, 91, 168 \\
$7^2$	&	{\bf 57}, \underline{87},  {\bf 114}, {\bf 171}	\\
$8^2$	&	$42m$, $62m$, {\bf 73}, \underline{127}, 217, {\bf 511}	\\
$9^2$	&	{\bf 91}, \underline{121},  {\bf 182}, {\bf 364}	\\
$10^2$	&	$62m$, $66m$, $142m$, 217, 231, 497, 994 \\
$11^2$	&	{\bf 133}, {\bf 665}	\\
$12^2$	&	$182m$, $336m$, 273, 403, 744	\\
$13^2$	&	{\bf 183}, {\bf 366}, {\bf 549}, {\bf 732}	\\
$14^2$	&	$114m$, $174m$, $342m$, 399, 609	\\
$15^2$	&	403, 429, 744, 806, 923 \\
$16^2$	&	$146m$, $254m$, $434m$, {\bf 273}, 511, 651, {\bf 819}, 868, 889	\\
$17^2$	&	{\bf 307}, {\bf 614}	\\
$18^2$	&	$182m$, $242m$, $624m$, 847	\\
$19^2$	&	{\bf 381}, {\bf 762}	\\ \hline
\end{tabular}
\caption{Known Proper $CW(n,k)$. Numbers in {\bf bold} come from
  Theorem~\ref{thm:rds}. Underlined entries are sporadic $CW$s that do not come from
Theorems~\ref{thm:kronecker} or \ref{thm:rds}. Entries $cm$ are for all $m$ such that $cm
  \geq k$. }
\label{tab:proper}
\end{center}
\end{table}




\bibliography{agz}
\bibliographystyle{plain}

\end{document}